\pgfplotsset{compat=newest}
\def\RR{\mathbb{R}}
\def\NN{\mathbb{N}}
\def\eps{\varepsilon}
\def\herd{\mathrm{herd}}
\def\cH{{\mathcal H}}
\def\cP{{\mathcal P}}
\def\cR{{\mathcal R}}
\def\cU{{\mathcal U}}
\definecolor{mygreen}{rgb}{0,0.7,0}
\definecolor{myblue}{rgb}{0,0,1}
\definecolor{myred}{rgb}{1,0,0}
\renewcommand{\geq}{\geqslant}
\renewcommand{\leq}{\leqslant}
\newtheorem{theorem}{Theorem}
\newtheorem{proposition}{Proposition}
\newtheorem{lemma}{Lemma}
\theoremstyle{definition}\newtheorem{remark}{Remark}
\title{\bf Optimal immunity control by social distancing for the SIR epidemic model}
\author{P.-A. Bliman\thanks{Inria, Sorbonne Universit\'e, Universit\'e Paris-Diderot SPC, CNRS,
Laboratoire Jacques-Louis Lions, \'equipe Mamba, Paris, France.
  (\texttt{pierre-alexandre.bliman@inria.fr}).}
\and M. Duprez\thanks{CEREMADE, Universit\'e Paris-Dauphine \& CNRS UMR 7534, Universit\'e PSL, 75016 Paris, France 
  (\texttt{mduprez@math.cnrs.fr}).}
\and Y. Privat\thanks{Universit\'e de Strasbourg, CNRS UMR 7501, INRIA, Institut de Recherche Math\'ematique Avanc\'ee (IRMA), 7 rue Ren\'e Descartes, 67084 Strasbourg, France
  (\texttt{yannick.privat@unistra.fr}).}
\and N. Vauchelet\thanks{LAGA, UMR 7539, CNRS, Universit\'e Sorbonne Paris Nord,
99 avenue Jean-Baptiste Cl\'ement, 93430 Villetaneuse, France
  (\texttt{vauchelet@math.univ-paris13.fr}).}
  }
\date{}
\begin{document}

\maketitle

\begin{abstract}
 Until a vaccine or therapy is found against the SARS-CoV-2 coronavirus, reaching herd immunity appears to be the only mid-term option.
However, if the number of infected individuals decreases and eventually fades only beyond this threshold, a significant proportion of susceptible  
may still be infected until the epidemic is over.
A containment strategy  is likely the best policy in the worst case where no vaccine or therapy is found.
In order to keep the number of newly infected persons to a minimum, a possible strategy
is to apply strict containment measures, so that the number of susceptible individuals remains close to herd immunity.
Such an action is unrealistic since containment can only last for a finite amount of time and is never total. In this article, using a classical SIR model,  
we determine the (partial or total) containment strategy on a given finite time interval that maximizes the  number of susceptible individuals over an infinite horizon, or equivalently that minimizes the total infection burden during the curse of the epidemic.
The existence and uniqueness of the optimal strategy is proved and the latter is fully characterized.
If applicable in practice, such a strategy would lead theoretically to an increase by 30\% of the proportion of susceptible 
on an infinite horizon, for a containment level corresponding to the sanitary measures put in place in France from  March to May 2020.
 We also analyze the minimum intervention time to reach a fixed distance from herd immunity, and show the relationship with the previous problem. Simulations are provided that  illustrate and validate the theoretical results.
\end{abstract}

\noindent\textbf{Keywords:} optimal control, SIR epidemic model, herd immunity, lockdown policy.

\medskip

\noindent\textbf{AMS classification:} 34H05, 49J15, 49K15, 92D30, 93C15.

%\tableofcontents

\section{Introduction}

Important efforts have been devoted to the application of optimal control theory to the control of infectious diseases.
Many works have considered the simple SIR model without demography that recently became even more famous in a broad community worldwide.
Such a model reads
\begin{subequations}
\label{eq1}
\begin{gather}
\label{eq1a}
\dot S = -\beta SI,\qquad S(0)=S_0,\\
\label{eq1b}
\dot I = \beta SI - \nu I,\qquad I(0) = I_0,\\
\label{eq1c}
\dot R = \nu I,\qquad R(0)=R_0.
\end{gather}
\end{subequations}
The state variables $S$, $I$, $R$ correspond respectively to the proportions of susceptible, infected and removed individuals in the population. 
Notice that the sum of the derivatives of the three state variables is zero, so the sum of the variables remains equal to 1 if initially $S_0+I_0+R_0=1$. 
One may therefore describe the system solely with the equations \eqref{eq1a}-\eqref{eq1b}.
The total population is constant and 
no demographic effect (births, deaths) is modeled, as they are not relevant to the time scale to be taken into account in reacting to an outbreak.
The positive parameter $\beta$ in the infection term $\beta SI$ accounts at the same time for the rate of encounters between the individuals and the probability of transmitting the infection during each of theses encounters.
The positive parameter $\nu$ describes the rate at which the infected are removed (typically by recovery) by time unit.
Cured people are supposed to have acquired permanent immunity.

An important issue, abundantly discussed these days, is {\em herd immunity}.
The latter occurs naturally when a large proportion of the population has become immune to the infection.
Mathematically, it is defined as the value of $S$ below which the number of infected decreases.
For the SIR model \eqref{eq1}, one has $\dot I  = (\beta S - \nu)I$, and this number reads
\begin{equation}
\label{eq33}
S_\herd := \frac{\nu}{\beta}.
\end{equation}
While the number of infected decreases when $S(\cdot) \leq S_\herd$, epidemics continue to consume susceptible and to generate new infections once the immunity threshold has been crossed.

The {\em basic reproduction number}
%\begin{equation}\label{eq111}
$\cR_0 := \beta/\nu = \frac{1}{S_\herd}$
%\end{equation}
governs the behaviour of the system departing from its initial value.
%As a matter of fact, this constant governs the evolution of the system departing from its initial value.
When $\mathcal{R}_0<1$, no epidemic may occur, whereas when $\mathcal{R}_0>1$ epidemic occurs if $S_0> \frac{1}{\cR_0}$.
In such a case, $I$ reaches a peak and then goes to zero due to the immunity effect previously presented.
From now on, we assume
$S_0>S_\herd$ and
\begin{equation*}
%\label{eq37}
\mathcal{R}_0 = \frac{\beta}{\nu} > 1.
\end{equation*}

Obviously, every solution of \eqref{eq1} is nonnegative and thus, $\dot R \geq 0 \geq \dot S$ at any instant, so $S$ may only decrease, while $R$ may only increase.
We infer that $S$  remains bounded and therefore,  the integral $\int_0^{+\infty} \beta S(t)I(t)\ dt$ converges, and in any case the following limit exists:
\begin{equation}
\label{eq41}
S_\infty := \lim_{t\to +\infty} S(t) \in [0,S_\herd],\qquad \lim_{t\to +\infty} I(t) = 0.
\end{equation}
%A key characteristic of the asymptotic behaviour is contained in t

%More precisely, the disease can invade only if there is initially a fraction of susceptibles greater than $\frac{1}{\cR_0}$, otherwise, the epidemics fades out.
We remark that, after passing the collective immunity threshold, that is while $t$ is large enough, one has 
\begin{equation}\label{eq:bound S}
S_\herd \geq S(t)\geq S_\infty.
\end{equation}

In order to illustrate herd immunity, Table \ref{fi1} displays, for several values of $\cR_0$,  the value of the herd immunity threshold and the number of  susceptibles that remain after the fading out of the outbreak, in the case of an initially naive population ($S_0\simeq 1$).
The proportion of infections occurred after the overcome of the immunity is also shown. 
\begin{table}[ht]
\begin{center}
\begin{tabular}{|c|cccccc|}
\hline
%Basic reproduction number
$\cR_0$ & 1.5 & 2 & 2.5 & \textbf{2.9}&3 & 3.5\\
\hline
%Collective immunity threshold
$S_\herd$ & 0.67 & 0.50 & 0.40 &\textbf{0.34}& 0.33 & 0.29\\
\hline
%Asymptotic value
$S_\infty$
%for $S_0\simeq 1$
& 0.42 & 0.20 & 0.11 &\textbf{0.067}& 0.059 & 0.034\\
\hline
$\displaystyle\frac{S_\herd-S_\infty}{1-S_\infty}$
%Proportion of infections occurred while $S_\herd \geq S(t)\geq S_\infty$
& 43\% & 37\% & 33\% &\textbf{30\%}& 29\% & 27\% \\
\hline
\end{tabular}

\caption{Herd immunity level $S_\herd$ and asymptotic susceptible proportion $S_\infty$ for initial value $S_0\simeq 1$ for several values of the basic reproduction number $\cR_0$.
The value of $S_\herd$ comes from \eqref{eq33}, and $S_\infty$ may be deduced from the fact that $S_\infty -\frac{1}{\cR_0}\ln S_\infty \approx 1$, see Lemma \ref{co1} below.
The ratio $\frac{S_\herd-S_\infty}{1-S_\infty}$ represents the proportion of susceptible that occur after passing the collective immunity threshold. The column in bold corresponds to $\mathcal{R}_0$ found in \cite{Salje:2020aa} for the SARS-CoV-2 in France before the lockdown of March-May 2020.
%, that is while $S_\herd \geq S(t)\geq S_\infty$.
\label{fi1}}
\end{center}
\end{table}

Apart from medical treatment, there are generally speaking three main methods to control human diseases.
Each of them, alone or in conjunction with the others, gave rise to applications of optimal control.
We introduce a modified SIR system, where the control inputs $u_1(\cdot)$, $u_2(\cdot)$, $u_3(\cdot)$ materialize these three methods.
\begin{subequations}
\label{eq11}
\begin{gather}
\label{eq11a}
\dot S = -u_3\beta SI - u_1,\qquad S(0)=S_0,\\
\label{eq11b}
\dot I = u_3\beta SI - \nu I - u_2,\qquad I(0) = I_0,\\
\label{eq11c}
\dot R = \nu I + u_1 + u_2,\qquad R(0)=R_0.
\end{gather}
\end{subequations}

A first class of interventions consists in vaccination or immunization \cite{Abakuks:1974aa,Morton:1974aa,Di-Blasio:1980aa,Greenhalgh:1988aa,Behncke:2000aa,Gaff:2009aa,Hansen:2011aa,Hu:2013aa,Alkama:2014aa,Yang:2015aa,Laguzet:2015aa,Bolzoni:2017aa,Bolzoni:2019aa,Shim:2019aa}.
% or even culling for animal diseases.
It consists in transferring individuals from the $S$ compartment to the $R$ one: this is the effect of the additive input $u_1$ in \eqref{eq11}.
The members of the latter may not be anymore ``recovered" stricto sensu, but the key point in the model is that they are excluded from the infective process.
A second class of measures corresponds to screening and quarantining of infected \cite{Abakuks:1973aa,Wickwire:1975aa,Wickwire:1979aa,Behncke:2000aa,Ainseba:2012aa,Zhou:2013aa,Bolzoni:2017aa,Bolzoni:2019aa}.
It may be modelled by transfer of individuals from the $I$ compartment to the $R$ one, as done by the input $u_2$ in \eqref{eq11}.
Last, it is possible to reduce transmission through health promotion campaigns or lockdown policies \cite{Behncke:2000aa,Bolzoni:2017aa,Morris:2020aa}, as done by the multiplicative input $u_3$ in \eqref{eq11}.
Of course these methods may be employed jointly \cite{Hansen:2011aa}.

Other modelling frameworks have also been considered. More involved models called
SEIR and SIRS have been analyzed in \cite{Behncke:2000aa,Gaff:2009aa}, and SIR model structured by the infection age in \cite{Ainseba:2012aa}.
Constraints on the number of infected persons that the public health system can accommodate or on available resources, particularly in terms of vaccination, were studied \cite{Zhou:2013aa,Yang:2015aa,Bolzoni:2017aa,Bolzoni:2019aa}.
Economical considerations may be aggregated to the epidemiological model \cite{Kruse:2020aa,Piguillem:2020aa,Alvarez:2020aa}.
Ad-hoc models for tackling emergence of resistance to drugs issues have been introduced \cite{Jaberi-Douraki:2013aa,Jaberi-Douraki:2013ab}, as well as framework allowing to study revaccination policies \cite{Kolesin:2015aa,Kolesin:2016aa}.
Optimization of vaccination campaigns for vector-borne diseases have also been considered \cite{Shim:2019aa}.
The references cited above are limited to deterministic differential models, but discrete-time models and stochastic models have also been used e.g. in \cite{Abakuks:1973aa,Abakuks:1974aa,Greenhalgh:1988aa}.

The costs considered in the literature are usually integral costs combining an ``outbreak size" (the integral of the number of infected, or of the newly infected term, or the largest number of infected) and the input variable on a given finite time horizon.
Some minimal time control problems have also been considered \cite{Alkama:2014aa,Bolzoni:2017aa,Bolzoni:2019aa}.
Few results consider infinite horizon \cite{Behncke:2000aa}. 
Qualitatively, optimal solutions attached to the vaccination or isolation protocols are in general bang-bang\footnote{In other words, they only take a.e.\ two different values.}, with an intervention from the very beginning.
Bounds on the number of switching times between the two modes (typically zero or one) are sometimes provided.
By contrast, protocols of the third type (health promotion campaigns or lockdown policies) usually provide bang-bang optimal solutions with transmission reduction beginning after a certain time.

The present article is dedicated to the optimal control issue of obtaining, by enforcing social distancing, the largest value for $S_\infty$, the limit number of susceptible individuals at infinity. This means that we act on System~\eqref{eq11} through the function $u_3(\cdot)$ and set $u_1(\cdot)=u_2(\cdot)=0$.
This aim is also equivalent to minimize the total infection burden $\int_0^{+\infty} \beta  u_3(t)S(t)I(t)\ dt$ or, equivalently, to minimize the final number of removed $\nu\int_0^TI(t)dt$, see \eqref{eq11a}-\eqref{eq11b}.
We thus seek here to determine how close to the herd immunity threshold it is possible to stop the spread of the disease, in the case where no vaccine or treatment is found to modify its evolution.

In the case of interventions on a infinite horizon, a possible action is to let the susceptible proportion reach the collective immunity level ($u_3=1$), and then impose total lockdown ($u_3=0$).
This situation is illustrated in Figure \ref{fig1} (right) for parameters of the Covid-19 in France in 2020 borrowed from \cite{Salje:2020aa} and given in Table \ref{tab:value} below.
We will focus here on interventions on a given finite horizon through possibly partial lockdown, i.e.\ with $u_3$ bounded from below.
This setting is a prototypal problem, voluntarily unrealistic since for instance, no constraint on the peak value of the number of infected is considered.
In a nutshell, the aim is to determine what  best result may be obtained in terms of reduction of the total cumulative number of infected individuals, by applying lockdown of given maximal intensity and duration, in the worst conditions where no medical solution is discovered to stop earlier the epidemic spread. This models public interventions on the transmission rate by measures like social distanciation, restraining order, lockdown and so on, imposed on finite time horizon.

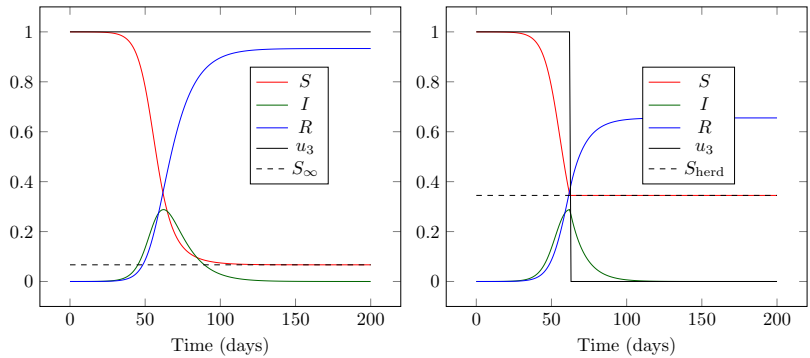
\begin{figure}[h] %\hfill
\begin{center}
\begin{tikzpicture}[thick,scale=0.7, every node/.style={scale=1.0}] \begin{axis}[xlabel=Time (days),,legend style={at={(0.8,0.8)}}]
\addplot[color=red] coordinates { 
(0.0,0.9999850746268657)
(1.0,0.9999803078410416)
(2.0,0.9999745436707488)
(3.0,0.9999675734463866)
(4.0,0.9999591448501336)
(5.0,0.9999489527900263)
(6.0,0.9999366283679209)
(7.0,0.9999217255440862)
(8.0,0.9999037050188164)
(9.0,0.999881914752167)
(10.0,0.9998555664233847)
(11.0,0.9998237069877883)
(12.0,0.9997851843160165)
(13.0,0.9997386056931209)
(14.0,0.9996822877064497)
(15.0,0.999614195754037)
(16.0,0.9995318710506894)
(17.0,0.999432342587359)
(18.0,0.999312020999904)
(19.0,0.9991665707144549)
(20.0,0.9989907560463698)
(21.0,0.998778256127011)
(22.0,0.998521442608034)
(23.0,0.9982111130418546)
(24.0,0.9978361716627374)
(25.0,0.9973832480125634)
(26.0,0.9968362425075228)
(27.0,0.9961757866992589)
(28.0,0.9953786047695257)
(29.0,0.9944167619087558)
(30.0,0.9932567849702414)
(31.0,0.9918586416166849)
(32.0,0.9901745667425094)
(33.0,0.9881477301913033)
(34.0,0.9857107489634677)
(35.0,0.982784061904316)
(36.0,0.9792742074034005)
(37.0,0.9750720774512845)
(38.0,0.9700512672247988)
(39.0,0.9640667006660917)
(40.0,0.9569537905431716)
(41.0,0.94852848471399)
(42.0,0.938588652087455)
(43.0,0.9269173570719071)
(44.0,0.9132886332698913)
(45.0,0.8974763545798369)
(46.0,0.8792666600235295)
(47.0,0.8584740576134586)
(48.0,0.8349607667482306)
(49.0,0.8086580589887477)
(50.0,0.7795874111061434)
(51.0,0.7478783958711802)
(52.0,0.7137797167218277)
(53.0,0.6776599858855513)
(54.0,0.6399959825037674)
(55.0,0.6013481592647179)
(56.0,0.5623256743894107)
(57.0,0.5235455012337112)
(58.0,0.4855914712077633)
(59.0,0.44897899599784874)
(60.0,0.41412975694395066)
(61.0,0.38135837401688677)
(62.0,0.35087071297606187)
(63.0,0.3227716860991776)
(64.0,0.29707946337174546)
(65.0,0.27374291363364966)
(66.0,0.2526595932982766)
(67.0,0.23369238197107217)
(68.0,0.21668366856844518)
(69.0,0.2014666588318789)
(70.0,0.18787384387678288)
(71.0,0.17574294582900812)
(72.0,0.16492078092424553)
(73.0,0.1552655015780045)
(74.0,0.14664764013317919)
(75.0,0.13895031038636227)
(76.0,0.13206884941093353)
(77.0,0.125910113373877)
(78.0,0.12039158239543259)
(79.0,0.11544038252533476)
(80.0,0.11099229697755249)
(81.0,0.10699081228991603)
(82.0,0.10338622620165924)
(83.0,0.1001348310122308)
(84.0,0.09719817749381503)
(85.0,0.09454241886841597)
(86.0,0.09213773099736873)
(87.0,0.08995780307342979)
(88.0,0.08797939225002568)
(89.0,0.08618193543183582)
(90.0,0.08454721163764609)
(91.0,0.08305904876161797)
(92.0,0.08170306909064272)
(93.0,0.08046646851080222)
(94.0,0.07933782490997768)
(95.0,0.07830693182966172)
(96.0,0.07736465392266353)
(97.0,0.07650280122822545)
(98.0,0.07571401968069134)
(99.0,0.07499169562387947)
(100.0,0.0743298724140111)
(101.0,0.07372317746354537)
(102.0,0.07316675831096844)
(103.0,0.0726562265018263)
(104.0,0.07218760823817322)
(105.0,0.07175730090089198)
(106.0,0.07136203467539537)
(107.0,0.07099883861902374)
(108.0,0.07066501060062697)
(109.0,0.07035809062161562)
(110.0,0.07007583709515351)
(111.0,0.06981620571781183)
(112.0,0.06957733061735792)
(113.0,0.06935750750263357)
(114.0,0.0691551785777377)
(115.0,0.06896891901385531)
(116.0,0.06879742479882363)
(117.0,0.0686395018075484)
(118.0,0.06849405595621405)
(119.0,0.06836008432034729)
(120.0,0.06823666711157693)
(121.0,0.06812296042073193)
(122.0,0.06801818964600746)
(123.0,0.06792164353455729)
(124.0,0.06783266877424228)
(125.0,0.06775066507955509)
(126.0,0.0676750807221045)
(127.0,0.06760540846159997)
(128.0,0.06754118183814738)
(129.0,0.06748197179093442)
(130.0,0.06742738357213762)
(131.0,0.06737705392818154)
(132.0,0.06733064852339668)
(133.0,0.06728785958369125)
(134.0,0.06724840374013084)
(135.0,0.06721202005433626)
(136.0,0.06717846820940374)
(137.0,0.06714752685164471)
(138.0,0.06711899206986414)
(139.0,0.06709267600016511)
(140.0,0.06706840554540071)
(141.0,0.06704602119941006)
(142.0,0.06702537596708727)
(143.0,0.0670063343721494)
(144.0,0.0669887715452061)
(145.0,0.06697257238539747)
(146.0,0.06695763078946422)
(147.0,0.06694384894265495)
(148.0,0.06693113666636533)
(149.0,0.06691941081784471)
(150.0,0.06690859473770884)
(151.0,0.06689861774136095)
(152.0,0.06688941465075392)
(153.0,0.06688092536322852)
(154.0,0.06687309445443515)
(155.0,0.06686587081259751)
(156.0,0.06685920730160325)
(157.0,0.06685306045061465)
(158.0,0.06684739016808294)
(159.0,0.0668421594782214)
(160.0,0.06683733427815279)
(161.0,0.0668328831140904)
(162.0,0.06682877697504588)
(163.0,0.06682498910267741)
(164.0,0.06682149481600488)
(165.0,0.06681827134981944)
(166.0,0.06681529770570974)
(167.0,0.06681255451471253)
(168.0,0.06681002391067473)
(169.0,0.06680768941348686)
(170.0,0.06680553582141324)
(171.0,0.0668035491118077)
(172.0,0.06680171634955764)
(173.0,0.06680002560265286)
(174.0,0.06679846586432188)
(175.0,0.06679702698122388)
(176.0,0.06679569958722215)
(177.0,0.06679447504230573)
(178.0,0.0667933453762561)
(179.0,0.0667923032366905)
(180.0,0.0667913418411402)
(181.0,0.06679045493285028)
(182.0,0.06678963674001065)
(183.0,0.06678888193815181)
(184.0,0.06678818561545956)
(185.0,0.06678754324078047)
(186.0,0.06678695063411079)
(187.0,0.06678640393937402)
(188.0,0.06678589959931089)
(189.0,0.06678543433231628)
(190.0,0.06678500511107334)
(191.0,0.06678460914284408)
(192.0,0.0667842438512883)
(193.0,0.06678390685969249)
(194.0,0.06678359597549867)
(195.0,0.06678330917603323)
(196.0,0.06678304459534148)
(197.0,0.06678280051204327)
(198.0,0.06678257533812974)
(199.0,0.06678236760862832)
(200.0,0.06678217597206923)

 };
\addplot[color=black!60!green] coordinates { 
(0.0,1.4925373134328359e-05)
(1.0,1.804841125735518e-05)
(2.0,2.182489175077052e-05)
(3.0,2.63915209071741e-05)
(4.0,3.191359816198464e-05)
(5.0,3.859099328096777e-05)
(6.0,4.666537151792117e-05)
(7.0,5.642892660323331e-05)
(8.0,6.823493520935744e-05)
(9.0,8.251051130643704e-05)
(10.0,9.977201674258135e-05)
(11.0,0.00012064367802745847)
(12.0,0.00014588007168688303)
(13.0,0.00017639327522106743)
(14.0,0.00021328564173078715)
(15.0,0.0002578893483591986)
(16.0,0.00031181409711803583)
(17.0,0.0003770046172646669)
(18.0,0.0004558099374304345)
(19.0,0.0005510667695784868)
(20.0,0.0006661997816595564)
(21.0,0.0008053420364521285)
(22.0,0.0009734794429774349)
(23.0,0.001176623702047641)
(24.0,0.0014220189193665533)
(25.0,0.0017183877864745906)
(26.0,0.0020762239512571883)
(27.0,0.002508137846919364)
(28.0,0.0030292637106401894)
(29.0,0.003657735629475884)
(30.0,0.004415239944857443)
(31.0,0.005327649853355174)
(32.0,0.00642574502633137)
(33.0,0.007746013794110007)
(34.0,0.00933152690477447)
(35.0,0.011232858781659737)
(36.0,0.013509012973652259)
(37.0,0.016228281293172213)
(38.0,0.019468929124241516)
(39.0,0.02331955115516111)
(40.0,0.027878882231216005)
(41.0,0.03325477968389297)
(42.0,0.03956202361442529)
(43.0,0.046918524702852446)
(44.0,0.05543950960713969)
(45.0,0.06522930771178088)
(46.0,0.07637053503533946)
(47.0,0.08891080725249678)
(48.0,0.10284764163765556)
(49.0,0.11811290756203223)
(50.0,0.1345589565203705)
(51.0,0.15194920374804796)
(52.0,0.16995615956865226)
(53.0,0.18816943235846784)
(54.0,0.20611489525433138)
(55.0,0.2232841556488694)
(56.0,0.23917116021459137)
(57.0,0.25331090924481636)
(58.0,0.26531450213109914)
(59.0,0.27489540180753075)
(60.0,0.281883689190879)
(61.0,0.28622755486113455)
(62.0,0.2879835846153755)
(63.0,0.28729895721243753)
(64.0,0.2843892617866061)
(65.0,0.2795153763760793)
(66.0,0.27296204402000646)
(67.0,0.265019787009967)
(68.0,0.25597087900436605)
(69.0,0.24607939266606166)
(70.0,0.2355848943554183)
(71.0,0.22469913653547682)
(72.0,0.2136050430412176)
(73.0,0.20245732996164312)
(74.0,0.19138420468289502)
(75.0,0.1804897021954491)
(76.0,0.1698563297463791)
(77.0,0.15954778764382874)
(78.0,0.149611611836895)
(79.0,0.14008164319585592)
(80.0,0.13098027157691472)
(81.0,0.12232043282321932)
(82.0,0.11410735684044565)
(83.0,0.10634007742185056)
(84.0,0.0990127217231795)
(85.0,0.09211560085618939)
(86.0,0.0856361242124664)
(87.0,0.07955955973558894)
(88.0,0.0738696610532009)
(89.0,0.06854918058679214)
(90.0,0.06358028575757782)
(91.0,0.05894489338141716)
(92.0,0.05462493540303016)
(93.0,0.05060256732070263)
(94.0,0.04686032902634853)
(95.0,0.04338126634104585)
(96.0,0.04014902025930632)
(97.0,0.03714788981550795)
(98.0,0.03436287353853167)
(99.0,0.03177969364964287)
(100.0,0.02938480646793034)
(101.0,0.027165401901789243)
(102.0,0.02510939440984299)
(103.0,0.02320540739757542)
(104.0,0.02144275266546975)
(105.0,0.019811406230710163)
(106.0,0.018301981598891546)
(107.0,0.0169057013572877)
(108.0,0.015614367790691938)
(109.0,0.014420333079243642)
(110.0,0.0133164695203773)
(111.0,0.012296140120166383)
(112.0,0.011353169819572972)
(113.0,0.010481817555662009)
(114.0,0.009676749304339592)
(115.0,0.008933012207644322)
(116.0,0.008246009853391299)
(117.0,0.007611478746637947)
(118.0,0.007025465989833981)
(119.0,0.0064843081706479995)
(120.0,0.005984611442504869)
(121.0,0.0055232327721298794)
(122.0,0.0050972623203004615)
(123.0,0.004704006916071776)
(124.0,0.004340974580566254)
(125.0,0.0040058600536653985)
(126.0,0.003696531275335396)
(127.0,0.0034110167726267558)
(128.0,0.003147493903419867)
(129.0,0.002904277908586102)
(130.0,0.0026798117252673083)
(131.0,0.002472656515338391)
(132.0,0.002281482864721044)
(133.0,0.0021050626109900984)
(134.0,0.0019422612585999171)
(135.0,0.0017920309430101598)
(136.0,0.0016534039069711422)
(137.0,0.0015254864542093671)
(138.0,0.0014074533477106393)
(139.0,0.0012985426217138466)
(140.0,0.0011980507783894887)
(141.0,0.0011053283419735157)
(142.0,0.001019775744852123)
(143.0,0.0009408395217420619)
(144.0,0.0008680087896811664)
(145.0,0.0008008119930338336)
(146.0,0.0007388138941261966)
(147.0,0.0006816127914564873)
(148.0,0.0006288379486793682)
(149.0,0.0005801472187408768)
(150.0,0.0005352248486457551)
(151.0,0.0004937794513741426)
(152.0,0.0004555421324329996)
(153.0,0.00042026475943234763)
(154.0,0.00038771836392072445)
(155.0,0.0003576916655013683)
(156.0,0.0003299897089837897)
(157.0,0.00030443260600767605)
(158.0,0.00028085437321057733)
(159.0,0.00025910185960046944)
(160.0,0.00023903375634192687)
(161.0,0.00022051968267296326)
(162.0,0.0002034393421411952)
(163.0,0.00018768174378531126)
(164.0,0.00017314448329318178)
(165.0,0.00015973307954354413)
(166.0,0.00014736036228608)
(167.0,0.00013594590703681951)
(168.0,0.00012541551356397732)
(169.0,0.00011570072461525146)
(170.0,0.00010673838179289663)
(171.0,9.847021571900823e-05)
(172.0,9.084246785180937e-05)
(173.0,8.380554151562854e-05)
(174.0,7.73136798938862e-05)
(175.0,7.13246689069116e-05)
(176.0,6.579956305582225e-05)
(177.0,6.070243246100021e-05)
(178.0,5.600012945978659e-05)
(179.0,5.166207325373311e-05)
(180.0,4.7660051211871706e-05)
(181.0,4.396803554371383e-05)
(182.0,4.056201415473733e-05)
(183.0,3.7419834588582226e-05)
(184.0,3.452106004463044e-05)
(185.0,3.184683653762048e-05)
(186.0,2.9379770337938252e-05)
(187.0,2.7103814897685793e-05)
(188.0,2.5004166528981034e-05)
(189.0,2.3067168157574648e-05)
(190.0,2.1280220527144387e-05)
(191.0,1.9631700277876724e-05)
(192.0,1.81108843674779e-05)
(193.0,1.6707880343857475e-05)
(194.0,1.541356201665833e-05)
(195.0,1.4219510109813856e-05)
(196.0,1.3117957509617652e-05)
(197.0,1.2101738752603224e-05)
(198.0,1.1164243425041251e-05)
(199.0,1.0299373171248183e-05)
(200.0,9.501502031324714e-06)

 };
\addplot[color=blue] coordinates { 
(0.0,0.0)
(1.0,1.643747700865568e-06)
(2.0,3.6314374998950507e-06)
(3.0,6.035032705479488e-06)
(4.0,8.941551703964949e-06)
(5.0,1.2456216692270612e-05)
(6.0,1.6706260560955718e-05)
(7.0,2.184552931026759e-05)
(8.0,2.806004597363166e-05)
(9.0,3.557473652600552e-05)
(10.0,4.466155987223402e-05)
(11.0,5.564933418332063e-05)
(12.0,6.893561229574682e-05)
(13.0,8.500103165719967e-05)
(14.0,0.0001044266518185947)
(15.0,0.00012791489760299464)
(16.0,0.0001563148521914815)
(17.0,0.0001906527953757372)
(18.0,0.00023216906266503132)
(19.0,0.0002823625159659197)
(20.0,0.0003430441719696282)
(21.0,0.00041640183653600935)
(22.0,0.0005050779489873798)
(23.0,0.0006122632560962014)
(24.0,0.0007418094178945396)
(25.0,0.0008983642009606585)
(26.0,0.0010875335412185096)
(27.0,0.0013160754538203124)
(28.0,0.0015921315198324322)
(29.0,0.0019255024617663133)
(30.0,0.002327975084899451)
(31.0,0.0028137085299580816)
(32.0,0.0033996882311576695)
(33.0,0.004106256014584649)
(34.0,0.004957724131756078)
(35.0,0.0059830793140228725)
(36.0,0.007216779622945768)
(37.0,0.008699641255542014)
(38.0,0.010479803650958614)
(39.0,0.012613748178746241)
(40.0,0.015167327225611265)
(41.0,0.01821673560211573)
(42.0,0.021849324298118533)
(43.0,0.02616411822523938)
(44.0,0.03127185712296793)
(45.0,0.03729433770838102)
(46.0,0.044362804941130346)
(47.0,0.05261513513404387)
(48.0,0.06219159161411316)
(49.0,0.07322903344921965)
(50.0,0.08585363237348545)
(51.0,0.1001724003807713)
(52.0,0.11626412370951922)
(53.0,0.13417058175597996)
(54.0,0.1538891222419005)
(55.0,0.17536768508641173)
(56.0,0.19850316539599674)
(57.0,0.22314358952147165)
(58.0,0.24909402666113672)
(59.0,0.2761256021946197)
(60.0,0.3039865538651695)
(61.0,0.33241407112197807)
(62.0,0.36114570240856186)
(63.0,0.38992935668838424)
(64.0,0.41853127484164765)
(65.0,0.44674170999027035)
(66.0,0.47437836268171635)
(67.0,0.5012878310189604)
(68.0,0.5273454524271886)
(69.0,0.5524539485020591)
(70.0,0.5765412617677984)
(71.0,0.5995579176355142)
(72.0,0.6214741760345359)
(73.0,0.6422771684603512)
(74.0,0.6619681551839246)
(75.0,0.6805599874181872)
(76.0,0.698074820842686)
(77.0,0.7145420989822928)
(78.0,0.7299968057676712)
(79.0,0.7444779742788079)
(80.0,0.7580274314455313)
(81.0,0.7706887548868633)
(82.0,0.7825064169578938)
(83.0,0.7935250915659171)
(84.0,0.803789100783004)
(85.0,0.8133419802753931)
(86.0,0.8222261447901631)
(87.0,0.8304826371909795)
(88.0,0.8381509466967714)
(89.0,0.8452688839813701)
(90.0,0.8518725026047743)
(91.0,0.8579960578569636)
(92.0,0.8636719955063263)
(93.0,0.8689309641684941)
(94.0,0.8738018460636725)
(95.0,0.8783118018292909)
(96.0,0.8824863258180283)
(97.0,0.8863493089562644)
(98.0,0.8899231067807749)
(99.0,0.8932286107264755)
(100.0,0.8962853211180564)
(101.0,0.8991114206346633)
(102.0,0.9017238472791866)
(103.0,0.9041383661005968)
(104.0,0.9063696390963555)
(105.0,0.9084312928683961)
(106.0,0.9103359837257108)
(107.0,0.9120954600236864)
(108.0,0.9137206216086792)
(109.0,0.9152215762991394)
(110.0,0.916607693384468)
(111.0,0.9178876541620203)
(112.0,0.9190694995630677)
(113.0,0.920160674941703)
(114.0,0.9211680721179212)
(115.0,0.9220980687784991)
(116.0,0.9229565653477838)
(117.0,0.9237490194458126)
(118.0,0.9244804780539506)
(119.0,0.9251556075090034)
(120.0,0.9257787214459168)
(121.0,0.9263538068071366)
(122.0,0.9268845480336908)
(123.0,0.9273743495493693)
(124.0,0.9278263566451895)
(125.0,0.9282434748667774)
(126.0,0.9286283880025583)
(127.0,0.9289835747657712)
(128.0,0.9293113242584303)
(129.0,0.929613750300477)
(130.0,0.9298928047025925)
(131.0,0.9301502895564772)
(132.0,0.9303878686118793)
(133.0,0.9306070778053154)
(134.0,0.930809335001266)
(135.0,0.9309959490026504)
(136.0,0.9311681278836217)
(137.0,0.9313269866941423)
(138.0,0.9314735545824214)
(139.0,0.9316087813781171)
(140.0,0.9317335436762058)
(141.0,0.9318486504586125)
(142.0,0.9319548482880566)
(143.0,0.9320528261061047)
(144.0,0.9321432196651088)
(145.0,0.9322266156215645)
(146.0,0.9323035553164056)
(147.0,0.9323745382658847)
(148.0,0.9324400253849514)
(149.0,0.9325004419634108)
(150.0,0.9325561804136416)
(151.0,0.9326076028072612)
(152.0,0.9326550432168093)
(153.0,0.9326988098773357)
(154.0,0.9327391871816404)
(155.0,0.9327764375218975)
(156.0,0.9328108029894094)
(157.0,0.9328425069433741)
(158.0,0.9328717554587028)
(159.0,0.932898738662174)
(160.0,0.932923631965501)
(161.0,0.9329465972032325)
(162.0,0.9329677836828089)
(163.0,0.9329873291535333)
(164.0,0.9330053607006978)
(165.0,0.933021995570633)
(166.0,0.9330373419319999)
(167.0,0.9330514995782462)
(168.0,0.9330645605757567)
(169.0,0.9330766098618931)
(170.0,0.9330877257967896)
(171.0,0.9330979806724686)
(172.0,0.933107441182586)
(173.0,0.933116168855827)
(174.0,0.9331242204557795)
(175.0,0.9331316483498642)
(176.0,0.9331385008497173)
(177.0,0.9331448225252281)
(178.0,0.933150654494279)
(179.0,0.9331560346900508)
(180.0,0.9331609981076431)
(181.0,0.9331655770316011)
(182.0,0.9331698012458294)
(183.0,0.9331736982272547)
(184.0,0.9331772933244914)
(185.0,0.9331806099226774)
(186.0,0.9331836695955468)
(187.0,0.9331864922457237)
(188.0,0.9331890962341556)
(189.0,0.9331914984995215)
(190.0,0.9331937146683948)
(191.0,0.9331957591568735)
(192.0,0.93319764526434)
(193.0,0.9331993852599596)
(194.0,0.9332009904624806)
(195.0,0.9332024713138526)
(196.0,0.9332038374471445)
(197.0,0.9332050977491996)
(198.0,0.9332062604184407)
(199.0,0.933207333018196)
(200.0,0.9332083225258947)

 };
 \addplot[color=black] coordinates {
(0.0,1.0)
(200.0,1.0)

 };
 
  \addplot[color=black,dashed] coordinates {
(0.0,0.06677824875404095 )
(200.0,0.06677824875404095 )

 };

 \legend{ $S$, $I$,$R$, $u_3$,$S_{\infty}$ }
\end{axis} 
\end{tikzpicture} %\hfill
\begin{tikzpicture}[thick,scale=0.7, every node/.style={scale=1.0}] \begin{axis}[xlabel=Time (days),%ylabel=$J(u)-J(u_h)$,
%xmin=1e3,xmax=1e4,
%ymin=1e-3,ymax=1e3
%,legend style={at={(0.04,0.5)},anchor=west}]
,legend style={at={(0.8,0.8)}}]
\addplot[color=red] coordinates { 
(0.0,0.9999850746268657)
(1.0,0.9999803078410416)
(2.0,0.9999745436707488)
(3.0,0.9999675734463866)
(4.0,0.9999591448501336)
(5.0,0.9999489527900263)
(6.0,0.9999366283679209)
(7.0,0.9999217255440862)
(8.0,0.9999037050188164)
(9.0,0.999881914752167)
(10.0,0.9998555664233847)
(11.0,0.9998237069877883)
(12.0,0.9997851843160165)
(13.0,0.9997386056931209)
(14.0,0.9996822877064497)
(15.0,0.999614195754037)
(16.0,0.9995318710506894)
(17.0,0.999432342587359)
(18.0,0.999312020999904)
(19.0,0.9991665707144549)
(20.0,0.9989907560463698)
(21.0,0.998778256127011)
(22.0,0.998521442608034)
(23.0,0.9982111130418546)
(24.0,0.9978361716627374)
(25.0,0.9973832480125634)
(26.0,0.9968362425075228)
(27.0,0.9961757866992589)
(28.0,0.9953786047695257)
(29.0,0.9944167619087558)
(30.0,0.9932567849702414)
(31.0,0.9918586416166849)
(32.0,0.9901745667425094)
(33.0,0.9881477301913033)
(34.0,0.9857107489634677)
(35.0,0.982784061904316)
(36.0,0.9792742074034005)
(37.0,0.9750720774512845)
(38.0,0.9700512672247988)
(39.0,0.9640667006660917)
(40.0,0.9569537905431716)
(41.0,0.94852848471399)
(42.0,0.938588652087455)
(43.0,0.9269173570719071)
(44.0,0.9132886332698913)
(45.0,0.8974763545798369)
(46.0,0.8792666600235295)
(47.0,0.8584740576134586)
(48.0,0.8349607667482306)
(49.0,0.8086580589887477)
(50.0,0.7795874111061434)
(51.0,0.7478783958711802)
(52.0,0.7137797167218277)
(53.0,0.6776599858855513)
(54.0,0.6399959825037674)
(55.0,0.6013481592647179)
(56.0,0.5623256743894107)
(57.0,0.5235455012337112)
(58.0,0.4855914712077633)
(59.0,0.44897899599784874)
(60.0,0.41412975694395066)
(61.0,0.38135837401688677)
(62.0,0.35087071297606187)
(63.0,0.3447700312861869)
(64.0,0.3447700312861869)
(65.0,0.3447700312861869)
(66.0,0.3447700312861869)
(67.0,0.3447700312861869)
(68.0,0.3447700312861869)
(69.0,0.3447700312861869)
(70.0,0.3447700312861869)
(71.0,0.3447700312861869)
(72.0,0.3447700312861869)
(73.0,0.3447700312861869)
(74.0,0.3447700312861869)
(75.0,0.3447700312861869)
(76.0,0.3447700312861869)
(77.0,0.3447700312861869)
(78.0,0.3447700312861869)
(79.0,0.3447700312861869)
(80.0,0.3447700312861869)
(81.0,0.3447700312861869)
(82.0,0.3447700312861869)
(83.0,0.3447700312861869)
(84.0,0.3447700312861869)
(85.0,0.3447700312861869)
(86.0,0.3447700312861869)
(87.0,0.3447700312861869)
(88.0,0.3447700312861869)
(89.0,0.3447700312861869)
(90.0,0.3447700312861869)
(91.0,0.3447700312861869)
(92.0,0.3447700312861869)
(93.0,0.3447700312861869)
(94.0,0.3447700312861869)
(95.0,0.3447700312861869)
(96.0,0.3447700312861869)
(97.0,0.3447700312861869)
(98.0,0.3447700312861869)
(99.0,0.3447700312861869)
(100.0,0.3447700312861869)
(101.0,0.3447700312861869)
(102.0,0.3447700312861869)
(103.0,0.3447700312861869)
(104.0,0.3447700312861869)
(105.0,0.3447700312861869)
(106.0,0.3447700312861869)
(107.0,0.3447700312861869)
(108.0,0.3447700312861869)
(109.0,0.3447700312861869)
(110.0,0.3447700312861869)
(111.0,0.3447700312861869)
(112.0,0.3447700312861869)
(113.0,0.3447700312861869)
(114.0,0.3447700312861869)
(115.0,0.3447700312861869)
(116.0,0.3447700312861869)
(117.0,0.3447700312861869)
(118.0,0.3447700312861869)
(119.0,0.3447700312861869)
(120.0,0.3447700312861869)
(121.0,0.3447700312861869)
(122.0,0.3447700312861869)
(123.0,0.3447700312861869)
(124.0,0.3447700312861869)
(125.0,0.3447700312861869)
(126.0,0.3447700312861869)
(127.0,0.3447700312861869)
(128.0,0.3447700312861869)
(129.0,0.3447700312861869)
(130.0,0.3447700312861869)
(131.0,0.3447700312861869)
(132.0,0.3447700312861869)
(133.0,0.3447700312861869)
(134.0,0.3447700312861869)
(135.0,0.3447700312861869)
(136.0,0.3447700312861869)
(137.0,0.3447700312861869)
(138.0,0.3447700312861869)
(139.0,0.3447700312861869)
(140.0,0.3447700312861869)
(141.0,0.3447700312861869)
(142.0,0.3447700312861869)
(143.0,0.3447700312861869)
(144.0,0.3447700312861869)
(145.0,0.3447700312861869)
(146.0,0.3447700312861869)
(147.0,0.3447700312861869)
(148.0,0.3447700312861869)
(149.0,0.3447700312861869)
(150.0,0.3447700312861869)
(151.0,0.3447700312861869)
(152.0,0.3447700312861869)
(153.0,0.3447700312861869)
(154.0,0.3447700312861869)
(155.0,0.3447700312861869)
(156.0,0.3447700312861869)
(157.0,0.3447700312861869)
(158.0,0.3447700312861869)
(159.0,0.3447700312861869)
(160.0,0.3447700312861869)
(161.0,0.3447700312861869)
(162.0,0.3447700312861869)
(163.0,0.3447700312861869)
(164.0,0.3447700312861869)
(165.0,0.3447700312861869)
(166.0,0.3447700312861869)
(167.0,0.3447700312861869)
(168.0,0.3447700312861869)
(169.0,0.3447700312861869)
(170.0,0.3447700312861869)
(171.0,0.3447700312861869)
(172.0,0.3447700312861869)
(173.0,0.3447700312861869)
(174.0,0.3447700312861869)
(175.0,0.3447700312861869)
(176.0,0.3447700312861869)
(177.0,0.3447700312861869)
(178.0,0.3447700312861869)
(179.0,0.3447700312861869)
(180.0,0.3447700312861869)
(181.0,0.3447700312861869)
(182.0,0.3447700312861869)
(183.0,0.3447700312861869)
(184.0,0.3447700312861869)
(185.0,0.3447700312861869)
(186.0,0.3447700312861869)
(187.0,0.3447700312861869)
(188.0,0.3447700312861869)
(189.0,0.3447700312861869)
(190.0,0.3447700312861869)
(191.0,0.3447700312861869)
(192.0,0.3447700312861869)
(193.0,0.3447700312861869)
(194.0,0.3447700312861869)
(195.0,0.3447700312861869)
(196.0,0.3447700312861869)
(197.0,0.3447700312861869)
(198.0,0.3447700312861869)
(199.0,0.3447700312861869)
(200.0,0.3447700312861869)

 };
\addplot[color=black!60!green] coordinates { 
(0.0,1.4925373134328359e-05)
(1.0,1.804841125735518e-05)
(2.0,2.182489175077052e-05)
(3.0,2.63915209071741e-05)
(4.0,3.191359816198464e-05)
(5.0,3.859099328096777e-05)
(6.0,4.666537151792117e-05)
(7.0,5.642892660323331e-05)
(8.0,6.823493520935744e-05)
(9.0,8.251051130643704e-05)
(10.0,9.977201674258135e-05)
(11.0,0.00012064367802745847)
(12.0,0.00014588007168688303)
(13.0,0.00017639327522106743)
(14.0,0.00021328564173078715)
(15.0,0.0002578893483591986)
(16.0,0.00031181409711803583)
(17.0,0.0003770046172646669)
(18.0,0.0004558099374304345)
(19.0,0.0005510667695784868)
(20.0,0.0006661997816595564)
(21.0,0.0008053420364521285)
(22.0,0.0009734794429774349)
(23.0,0.001176623702047641)
(24.0,0.0014220189193665533)
(25.0,0.0017183877864745906)
(26.0,0.0020762239512571883)
(27.0,0.002508137846919364)
(28.0,0.0030292637106401894)
(29.0,0.003657735629475884)
(30.0,0.004415239944857443)
(31.0,0.005327649853355174)
(32.0,0.00642574502633137)
(33.0,0.007746013794110007)
(34.0,0.00933152690477447)
(35.0,0.011232858781659737)
(36.0,0.013509012973652259)
(37.0,0.016228281293172213)
(38.0,0.019468929124241516)
(39.0,0.02331955115516111)
(40.0,0.027878882231216005)
(41.0,0.03325477968389297)
(42.0,0.03956202361442529)
(43.0,0.046918524702852446)
(44.0,0.05543950960713969)
(45.0,0.06522930771178088)
(46.0,0.07637053503533946)
(47.0,0.08891080725249678)
(48.0,0.10284764163765556)
(49.0,0.11811290756203223)
(50.0,0.1345589565203705)
(51.0,0.15194920374804796)
(52.0,0.16995615956865226)
(53.0,0.18816943235846784)
(54.0,0.20611489525433138)
(55.0,0.2232841556488694)
(56.0,0.23917116021459137)
(57.0,0.25331090924481636)
(58.0,0.26531450213109914)
(59.0,0.27489540180753075)
(60.0,0.281883689190879)
(61.0,0.28622755486113455)
(62.0,0.2879835846153755)
(63.0,0.26615669181886814)
(64.0,0.24082853381838049)
(65.0,0.217910668729612)
(66.0,0.19717372685579412)
(67.0,0.17841016591272668)
(68.0,0.16143219387584093)
(69.0,0.14606988949449828)
(70.0,0.1321695016630015)
(71.0,0.11959191062785131)
(72.0,0.10821123563049363)
(73.0,0.097913575050378)
(74.0,0.08859586643925539)
(75.0,0.08016485503755567)
(76.0,0.07253616044940986)
(77.0,0.065633432135287)
(78.0,0.059387585270132257)
(79.0,0.053736109319217566)
(80.0,0.04862244241169954)
(81.0,0.043995405250404934)
(82.0,0.03980868889222262)
(83.0,0.036020391272635975)
(84.0,0.03259259783577739)
(85.0,0.029491002072809608)
(86.0,0.026684562170854537)
(87.0,0.024145190336096377)
(88.0,0.021847471681700547)
(89.0,0.019768409867083937)
(90.0,0.017887196942809058)
(91.0,0.016185005097632273)
(92.0,0.014644798223440622)
(93.0,0.0132511614121558)
(94.0,0.01199014667815295)
(95.0,0.010849133362132498)
(96.0,0.00981670181931989)
(97.0,0.00888251912782244)
(98.0,0.00803723567327399)
(99.0,0.007272391574751841)
(100.0,0.00658033201544501)
(101.0,0.005954130630674702)
(102.0,0.00538752018650859)
(103.0,0.004874829855177107)
(104.0,0.004410928459523122)
(105.0,0.003991173118456287)
(106.0,0.0036113627794385644)
(107.0,0.0032676961729384006)
(108.0,0.0029567337680476095)
(109.0,0.0026753633484999685)
(110.0,0.0024207688645647837)
(111.0,0.0021904022490746686)
(112.0,0.001981957915512908)
(113.0,0.0017933496829286562)
(114.0,0.0016226898967367934)
(115.0,0.0014682705364363778)
(116.0,0.0013285461211673827)
(117.0,0.0012021182420187994)
(118.0,0.0010877215662822319)
(119.0,0.0009842111735768576)
(120.0,0.0008905510971014372)
(121.0,0.000805803955330366)
(122.0,0.000729121570384302)
(123.0,0.000659736479180865)
(124.0,0.0005969542524061564)
(125.0,0.0005401465444327805)
(126.0,0.0004887448046256095)
(127.0,0.00044223458709593205)
(128.0,0.00040015040195408714)
(129.0,0.00036207105653019247)
(130.0,0.00032761543993633586)
(131.0,0.000296438708780713)
(132.0,0.0002682288358590576)
(133.0,0.00024270348728150406)
(134.0,0.00021960719678012242)
(135.0,0.00019870880891664354)
(136.0,0.00017979916560113902)
(137.0,0.00016268901276755677)
(138.0,0.00014720710625541728)
(139.0,0.0001331984979406988)
(140.0,0.00012052298496293152)
(141.0,0.00010905370652784719)
(142.0,9.867587424190982e-05)
(143.0,8.928562327149192e-05)
(144.0,8.078897282870924e-05)
(145.0,7.310088558010755e-05)
(146.0,6.61444165644475e-05)
(147.0,5.9849943101670444e-05)
(148.0,5.415446798571529e-05)
(149.0,4.900098898730631e-05)
(150.0,4.433792835648332e-05)
(151.0,4.011861661514427e-05)
(152.0,3.630082547322219e-05)
(153.0,3.2846345193764784e-05)
(154.0,2.9720602177044387e-05)
(155.0,2.689231293635113e-05)
(156.0,2.4333171002343307e-05)
(157.0,2.201756362238808e-05)
(158.0,1.9922315419524358e-05)
(159.0,1.802645644550064e-05)
(160.0,1.6311012306484706e-05)
(161.0,1.4758814260952587e-05)
(162.0,1.3354327389152817e-05)
(163.0,1.2083495114408088e-05)
(164.0,1.0933598520171289e-05)
(165.0,9.893129054833714e-06)
(166.0,8.951673350272395e-06)
(167.0,8.099809001361891e-06)
(168.0,7.329010263376818e-06)
(169.0,6.631562723473018e-06)
(170.0,6.00048609225092e-06)
(171.0,5.4294643426730785e-06)
(172.0,4.912782497142684e-06)
(173.0,4.4452694300869e-06)
(174.0,4.022246113594068e-06)
(175.0,3.6394787881296784e-06)
(176.0,3.2931365896479446e-06)
(177.0,2.9797532090168316e-06)
(178.0,2.6961922000311913e-06)
(179.0,2.4396155888049482e-06)
(180.0,2.207455470374574e-06)
(181.0,1.997388308243111e-06)
(182.0,1.8073116796459348e-06)
(183.0,1.635323233797083e-06)
(184.0,1.4797016525231886e-06)
(185.0,1.3388894227326424e-06)
(186.0,1.211477248301077e-06)
(187.0,1.0961899453620702e-06)
(188.0,9.91873679838409e-07)
(189.0,8.974844194828239e-07)
(190.0,8.120774848523512e-07)
(191.0,7.347980946389471e-07)
(192.0,6.648728107308565e-07)
(193.0,6.016017973840273e-07)
(194.0,5.443518170307631e-07)
(195.0,4.925498926253056e-07)
(196.0,4.456775730969764e-07)
(197.0,4.032657445176063e-07)
(198.0,3.648899350516645e-07)
(199.0,3.3016606669946156e-07)
(200.0,2.987466113154333e-07)

 };
\addplot[color=blue] coordinates { 
(0.0,0.0)
(1.0,1.643747700865568e-06)
(2.0,3.6314374998950507e-06)
(3.0,6.035032705479488e-06)
(4.0,8.941551703964949e-06)
(5.0,1.2456216692270612e-05)
(6.0,1.6706260560955718e-05)
(7.0,2.184552931026759e-05)
(8.0,2.806004597363166e-05)
(9.0,3.557473652600552e-05)
(10.0,4.466155987223402e-05)
(11.0,5.564933418332063e-05)
(12.0,6.893561229574682e-05)
(13.0,8.500103165719967e-05)
(14.0,0.0001044266518185947)
(15.0,0.00012791489760299464)
(16.0,0.0001563148521914815)
(17.0,0.0001906527953757372)
(18.0,0.00023216906266503132)
(19.0,0.0002823625159659197)
(20.0,0.0003430441719696282)
(21.0,0.00041640183653600935)
(22.0,0.0005050779489873798)
(23.0,0.0006122632560962014)
(24.0,0.0007418094178945396)
(25.0,0.0008983642009606585)
(26.0,0.0010875335412185096)
(27.0,0.0013160754538203124)
(28.0,0.0015921315198324322)
(29.0,0.0019255024617663133)
(30.0,0.002327975084899451)
(31.0,0.0028137085299580816)
(32.0,0.0033996882311576695)
(33.0,0.004106256014584649)
(34.0,0.004957724131756078)
(35.0,0.0059830793140228725)
(36.0,0.007216779622945768)
(37.0,0.008699641255542014)
(38.0,0.010479803650958614)
(39.0,0.012613748178746241)
(40.0,0.015167327225611265)
(41.0,0.01821673560211573)
(42.0,0.021849324298118533)
(43.0,0.02616411822523938)
(44.0,0.03127185712296793)
(45.0,0.03729433770838102)
(46.0,0.044362804941130346)
(47.0,0.05261513513404387)
(48.0,0.06219159161411316)
(49.0,0.07322903344921965)
(50.0,0.08585363237348545)
(51.0,0.1001724003807713)
(52.0,0.11626412370951922)
(53.0,0.13417058175597996)
(54.0,0.1538891222419005)
(55.0,0.17536768508641173)
(56.0,0.19850316539599674)
(57.0,0.22314358952147165)
(58.0,0.24909402666113672)
(59.0,0.2761256021946197)
(60.0,0.3039865538651695)
(61.0,0.33241407112197807)
(62.0,0.36114570240856186)
(63.0,0.38907327689494414)
(64.0,0.4144014348954318)
(65.0,0.4373192999842001)
(66.0,0.4580562418580181)
(67.0,0.47681980280108555)
(68.0,0.4937977748379715)
(69.0,0.5091600792193143)
(70.0,0.5230604670508111)
(71.0,0.5356380580859611)
(72.0,0.5470187330833189)
(73.0,0.5573163936634344)
(74.0,0.5666341022745573)
(75.0,0.5750651136762572)
(76.0,0.5826938082644031)
(77.0,0.5895965365785257)
(78.0,0.5958423834436802)
(79.0,0.6014938593945952)
(80.0,0.6066075263021129)
(81.0,0.6112345634634078)
(82.0,0.6154212798215899)
(83.0,0.6192095774411766)
(84.0,0.6226373708780351)
(85.0,0.6257389666410029)
(86.0,0.6285454065429587)
(87.0,0.6310847783777164)
(88.0,0.6333824970321124)
(89.0,0.6354615588467292)
(90.0,0.6373427717710041)
(91.0,0.6390449636161809)
(92.0,0.6405851704903729)
(93.0,0.6419788073016578)
(94.0,0.6432398220356607)
(95.0,0.6443808353516814)
(96.0,0.6454132668944942)
(97.0,0.6463474495859914)
(98.0,0.64719273304054)
(99.0,0.6479575771390619)
(100.0,0.6486496366983688)
(101.0,0.6492758380831385)
(102.0,0.6498424485273047)
(103.0,0.6503551388586358)
(104.0,0.65081904025429)
(105.0,0.6512387955953569)
(106.0,0.6516186059343743)
(107.0,0.6519622725408745)
(108.0,0.652273234945765)
(109.0,0.6525546053653127)
(110.0,0.6528091998492479)
(111.0,0.6530395664647384)
(112.0,0.6532480107983)
(113.0,0.6534366190308845)
(114.0,0.6536072788170763)
(115.0,0.6537616981773771)
(116.0,0.6539014225926465)
(117.0,0.6540278504717951)
(118.0,0.6541422471475317)
(119.0,0.6542457575402372)
(120.0,0.6543394176167128)
(121.0,0.6544241647584838)
(122.0,0.6545008471434299)
(123.0,0.6545702322346337)
(124.0,0.6546330144614084)
(125.0,0.654689822169382)
(126.0,0.6547412239091891)
(127.0,0.6547877341267189)
(128.0,0.6548298183118607)
(129.0,0.6548678976572846)
(130.0,0.6549023532738785)
(131.0,0.6549335300050341)
(132.0,0.6549617398779557)
(133.0,0.6549872652265332)
(134.0,0.6550103615170348)
(135.0,0.6550312599048984)
(136.0,0.655050169548214)
(137.0,0.6550672797010475)
(138.0,0.6550827616075593)
(139.0,0.6550967702158738)
(140.0,0.6551094457288514)
(141.0,0.6551209150072865)
(142.0,0.6551312928395725)
(143.0,0.6551406830905431)
(144.0,0.6551491797409857)
(145.0,0.6551568678282343)
(146.0,0.6551638242972501)
(147.0,0.6551701187707129)
(148.0,0.655175814245829)
(149.0,0.6551809677248275)
(150.0,0.6551856307854581)
(151.0,0.6551898500971995)
(152.0,0.6551936678883418)
(153.0,0.6551971223686208)
(154.0,0.6552002481116372)
(155.0,0.6552030764008782)
(156.0,0.6552056355428124)
(157.0,0.6552079511501923)
(158.0,0.6552100463983952)
(159.0,0.655211942257369)
(160.0,0.6552136577015083)
(161.0,0.6552152098995535)
(162.0,0.6552166143864255)
(163.0,0.6552178852187006)
(164.0,0.6552190351152947)
(165.0,0.65522007558476)
(166.0,0.6552210170404644)
(167.0,0.6552218689048132)
(168.0,0.655222639703551)
(169.0,0.6552233371510914)
(170.0,0.6552239682277228)
(171.0,0.6552245392494725)
(172.0,0.6552250559313182)
(173.0,0.6552255234443851)
(174.0,0.655225946467701)
(175.0,0.6552263292350264)
(176.0,0.6552266755772247)
(177.0,0.655226988960605)
(178.0,0.6552272725216138)
(179.0,0.6552275290982251)
(180.0,0.6552277612583436)
(181.0,0.6552279713255058)
(182.0,0.6552281614021341)
(183.0,0.6552283333905798)
(184.0,0.6552284890121611)
(185.0,0.655228629824391)
(186.0,0.6552287572365652)
(187.0,0.6552288725238679)
(188.0,0.6552289768401334)
(189.0,0.6552290712293938)
(190.0,0.6552291566363283)
(191.0,0.6552292339157185)
(192.0,0.6552293038410024)
(193.0,0.6552293671120157)
(194.0,0.6552294243619964)
(195.0,0.6552294761639207)
(196.0,0.6552295230362402)
(197.0,0.6552295654480687)
(198.0,0.6552296038238781)
(199.0,0.6552296385477466)
(200.0,0.6552296699672023)

 };
\addplot[color=black] coordinates {
(0.0,1.0)
(62.0,1.0)
(63.0,0.0)
(200.0,0.0)
};

\addplot[color=black,dashed] coordinates {
(0.0,0.3448275862068966)
(200.0,0.3448275862068966)

 };
 
  \legend{ $S$, $I$,$R$, $u_3$,$S_\herd$ }
\end{axis} 
\end{tikzpicture} %\hfill

\end{center}
  \caption{Numerical simulation of the SIR model \eqref{eq11} with the numerical parameters 
 given in Table \ref{tab:value} and $u_1(\cdot)=u_2(\cdot)=0$. Left: no action ($u_3(\cdot)=1$).
Right: bang-bang action with one switch at the epidemic peak ($u_3(\cdot)=1$ on $(0,62)$ and $u_3(\cdot)=0$ otherwise).\label{fig1}}
\end{figure}

For better readability, all results are exposed in Section \ref{se2}.
It is shown in Section~\ref{se20} that, at least for strong enough lockdown measures and long enough intervention time $T$, it is possible to stop the epidemics arbitrarily close past the herd immunity.
In Section \ref{se22}, we provide and analyze the optimal control law that leads asymptotically, through an intervention of duration $T$, to the largest number of susceptible individuals.
Last, we show in Section \ref{se23} that this optimal strategy coincides with a time minimal policy.
For the sake of readability, all the proofs are postponed to Section \ref{se3}.
These results are numerically illustrated in Section \ref{se24}.
%Then, the number of people which has been infected during the course of the epidemic is $R_\infty:=\lim_{t\to +\infty} R(t)$. It can be computed by solving the nonlinear equation
%$$
%\ln(S_0) = \ln(S_0 +I_0 -R_\infty) + \frac{1}{\nu} R_\infty \simeq \ln(S_0 -R_\infty) + \frac{1}{\nu} R_\infty.
%$$

%\cite{Roques:2020aa,Elie:2020aa}

\section{Main results}
\label{se2}
%\section{Control on the SIR model}

For the sake of clarity, the demonstrations of all the results of this section are postponed to Section~\ref{se3}.

According to the introduction in the previous section, we consider in the sequel the following ``SIR type'' system
\begin{equation}\label{SIR}
\begin{array}{ll}
\dot S(t) = -u(t)\beta S(t)I(t), & t\geqslant 0\\
\dot I(t) = u(t) \beta S(t)I(t) - \nu I(t), &t\geqslant 0
\end{array}
\end{equation}
complemented with nonnegative initial data $S(0)=S_0$, $I(0)=I_0$ such that $S_0+I_0 \leq 1$.
For given $T>0$ and $\alpha\in[0,1)$, the input control $u$ will be assumed to belong to the so-called {\it admissible set} $\mathcal{U}_{\alpha,T} $ defined by
\begin{equation*}
%\label{calU}
  \mathcal{U}_{\alpha,T} := \{u \in L^\infty([0,+\infty)),\ \alpha\leq u(t) \leq 1 \text{ if } t\in [0,T], u(t)=1 \text{ if } t> T\}.
\end{equation*}
The constant $T$  characterizes the duration of the intervention, and $\alpha$ its intensity  (typically the strength of a lockdown procedure).

\subsection{Toward an optimal control problem: reachable asymptotic immunity levels}
\label{se20}

The following result assesses the question of stopping the evolution exactly at, or arbitrarily close to, the herd immunity $S_\herd$ defined by \eqref{eq33}.

\begin{proposition}
\label{prop:Sinf}
Let $\alpha\in [0,1)$ and $T>0$.  Assume that $S_0 > S_\herd$ and consider
  \begin{equation}\label{eq:alpha}
   \overline{\alpha}:=\frac{S_\herd}{S_0+I_0-S_\herd}(\ln S_0 - \ln S_\herd).
  \end{equation}
  \begin{enumerate}
  \item[(i)] If $\alpha\leq \overline{\alpha}$, then, for all $\eps\in (0,S_\herd)$, there exist $T>0$ and a control $u\in \cU_{\alpha,T}$ such that the solution $S$ to \eqref{SIR} associated to $u$ satisfies
  \[
S_\herd\geq  \lim_{t\to + \infty} S(t) \geq S_\herd - \eps.
  \]
    \item[(ii)] If $\alpha> \overline{\alpha}$, then for all $u\in \cU_{\alpha,T}$  the solution $S$ to \eqref{SIR}  associated to $u$ satisfies
    $$  \lim_{t\to + \infty} S(t)\leq  \lim_{t\to + \infty} S^{\alpha}(t)<S_\herd, $$
    where $S^{\alpha}$ is the solution to 
    \eqref{SIR} associated to  $u\equiv\alpha$.
%    \begin{equation*}%\label{SIR}
%\left\{\begin{array}{ll}
%\dot S^{\alpha}(t) = -\alpha S^{\alpha}(t)I^{\alpha}(t) & t\in [0,\infty]\\
%\dot I^{\alpha}(t) = \alpha S^{\alpha}(t)I^{\alpha}(t) - \nu I^{\alpha}(t), & t\in [0,\infty].
%\end{array}\right.
%\end{equation*}

 Moreover, the map $\alpha\mapsto  \lim_{t\to + \infty} S^{\alpha}(t)$ is strictly decreasing.
  \item[(iii)] There is no $T\in (0,+\infty)$ and $u\in \cU_{\alpha,T}$ such that the solution $S$ to \eqref{SIR}  associated to $u$ satisfies
   \[
  \lim_{t\to + \infty} S(t) = S_\herd.
  \]
  \end{enumerate}
\end{proposition}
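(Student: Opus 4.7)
My proof plan rests on a one-parameter family of almost-conserved quantities
\[
W_\alpha(t) := I(t) + S(t) - \frac{S_\herd}{\alpha} \ln S(t), \qquad \alpha \in (0,1],
\]
whose derivative along \eqref{SIR} is $\dot W_\alpha = (\nu I/\alpha)(u(t) - \alpha)$. In particular, $W_1$ is conserved whenever $u = 1$, $W_\alpha$ is conserved whenever $u \equiv \alpha$, and $W_\alpha$ is nondecreasing whenever $u \geq \alpha$ pointwise. The three items of the proposition will each be driven by one of these three properties, applied to a well-chosen value of $\alpha$.

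For (i), I would use the admissible control $u \equiv \overline{\alpha}$ on $[0,T]$ and $u = 1$ on $(T,+\infty)$, which is admissible since $\overline{\alpha} \in [\alpha, 1]$. Conservation of $W_{\overline{\alpha}}$ on $[0,T]$ forces the trajectory under constant $\overline{\alpha}$ to converge as $t \to +\infty$ to $(s,0)$ with $s \leq S_\herd/\overline{\alpha}$ solving $s - (S_\herd/\overline{\alpha}) \ln s = S_0 + I_0 - (S_\herd/\overline{\alpha}) \ln S_0$, and the definition \eqref{eq:alpha} of $\overline{\alpha}$ is tailored precisely so that this root is $s = S_\herd$. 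Thus $(S(T), I(T)) \to (S_\herd, 0)$ as $T \to +\infty$. After the release, $W_1$ is conserved on $(T,+\infty)$, so $S_\infty$ is the root of $f(S_\infty) = f(S(T)) + I(T)$ lying below $S_\herd$, where $f(s) := s - S_\herd \ln s$ attains its unique minimum at $S_\herd$. As the right-hand side tends to $f(S_\herd)$ and $f$ is locally quadratic there, $S_\infty \to S_\herd^-$, and picking $T$ large enough yields $S_\herd - \eps \leq S_\infty \leq S_\herd$.

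For (ii), the relation $\alpha s - S_\herd \ln s = \alpha(S_0 + I_0) - S_\herd \ln S_0$ defines $S_\infty^\alpha$ on the decreasing branch $s \leq S_\herd/\alpha$. Implicit differentiation gives
\[
\partial_\alpha S_\infty^\alpha = \frac{S_0 + I_0 - S_\infty^\alpha}{\alpha - S_\herd/S_\infty^\alpha} < 0,
\]
since the numerator is positive ($S$ is decreasing along the trajectory) and the denominator negative ($S_\infty^\alpha$ lives on the decreasing branch); hence $\alpha \mapsto S_\infty^\alpha$ is strictly decreasing. Combined with $S_\infty^{\overline{\alpha}} = S_\herd$, this yields $S_\infty^\alpha < S_\herd$ for every $\alpha > \overline{\alpha}$. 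For the comparison with an arbitrary $u \in \cU_{\alpha,T}$, the inequality $u \geq \alpha$ gives $\dot W_\alpha \geq 0$, hence $g(S_\infty) \geq g(S_\infty^\alpha)$ with $g(s) := s - (S_\herd/\alpha) \ln s$. The release phase $u = 1$ on $(T, +\infty)$ with $I(T) > 0$ drives $S_\infty$ strictly below $S_\herd$, so both $S_\infty$ and $S_\infty^\alpha$ sit on the strictly decreasing branch of $g$, and the inequality rewrites as $S_\infty \leq S_\infty^\alpha$.

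For (iii), assume for contradiction that $S_\infty = S_\herd$. Since $u = 1$ on $(T,+\infty)$, $W_1$ is conserved there, forcing $f(S(T)) + I(T) = W_1(T) = W_1(+\infty) = f(S_\herd)$. But $f(s) \geq f(S_\herd)$ for every $s > 0$ with equality only at $s = S_\herd$, and $I(T) > 0$ strictly by positivity preservation of the ODE (since $I_0 > 0$), so the left-hand side is strictly greater than $f(S_\herd)$, a contradiction. The main technical subtlety I anticipate is in (ii): converting the Lyapunov-type inequality $g(S_\infty) \geq g(S_\infty^\alpha)$ into the pointwise comparison $S_\infty \leq S_\infty^\alpha$ requires placing both values on the decreasing branch of $g$, which relies on the a priori bound $S_\infty < S_\herd$ obtained by a separate analysis of the post-release dynamics.
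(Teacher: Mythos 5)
Your proof is correct, and its engine is the same as the paper's: your $W_\alpha$ is exactly the paper's $\Phi_{\alpha\cR_0}$ (since $\tfrac{1}{\alpha\cR_0}=\tfrac{S_\herd}{\alpha}$), and items (i) and (ii) follow the paper's route essentially verbatim — constant control $\overline{\alpha}$ tuned so that $\Phi_{\overline{\alpha}\cR_0}(S_\herd,0)=\Phi_{\overline{\alpha}\cR_0}(S_0,I_0)$, then release; monotonicity of $W_\alpha$ under $u\geq\alpha$ plus the fact that both limits lie on the decreasing branch. You diverge in two small places. For the strict monotonicity of $\alpha\mapsto S_\infty^\alpha$ you use implicit differentiation of the conservation identity (the paper sketches this in a remark but its proof instead compares $\Phi_{\alpha_1\cR_0}$ along the $u_{\alpha_2}$-trajectory); both work, and yours makes the sign analysis explicit, provided you note, as you do, that $S_\infty^\alpha<S_\herd/\alpha$ strictly. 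For (iii) your argument is genuinely different and arguably cleaner: the paper argues dynamically (if $S>S_\herd$ forever then $I$ grows and $S\to 0$), whereas you use conservation of $W_1$ on $(T,+\infty)$ together with $f(s)\geq f(S_\herd)$ and $I(T)>0$ to get an immediate contradiction; this buys you a one-line static proof at the cost of invoking $I(t)\to 0$ (which the paper has already established for the free dynamics, so nothing is lost).
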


The proof of Proposition \ref{prop:Sinf} is given in Section \ref{sec:proof prop}.

\begin{remark}
The values given in Table~\ref{tab:value}, which correspond to the sanitary measures put in place in France during the March to May 2020 containment period, satisfy $S_0 > S_\herd$ and $\alpha\leq \overline{\alpha}$.
According to Table~\ref{fig1}, there thus exists a containment strategy that increases the limit proportion of susceptible by $30\%$.
\end{remark}

\noindent {\bf Conclusion of this section.} To tackle the issue of reaching the closest state to the threshold of herd immunity with the help of control functions $u(\cdot)$, it seems relevant to maximize the limit  of $S(t)$ when $t\to +\infty$ for a given duration $T$ of the intervention.
%to minimize the limit of $S$ as $T\to +\infty$.
This leads to consider the optimal control problem
\begin{equation}\tag{$\mathcal{P}_{\alpha,T}$}
\label{OCP}
\boxed{\sup_{u\in\cU_{\alpha, T}}S_\infty(u)}
\end{equation}
where $$S_\infty(u):=\lim\limits_{t\rightarrow\infty}S(t),$$ with $(S,I)$ the solution to \eqref{SIR} associated to $u$.

We now make an observation, which will be crucial in the analysis (see details in Section \ref{se31}).
It turns out that the quantity $S+I-\frac{1}{\mathcal{R}_0}\ln(S)$ is constant on any time interval on which $u(\cdot)= 1$ (see Lemma~\ref{co1}).
Therefore, using the fact that $\lim\limits_{t\rightarrow +\infty}I(t) =0$ and the monotonicity of $x\mapsto x- \frac{1}{\mathcal{R}_0}\ln x$, 
 the optimal control problem~\eqref{OCP} is indeed equivalent to
\[
\boxed{\inf_{u\in\cU_{\alpha, T}}S(T)+I(T)-\frac{1}{\mathcal{R}_0}\ln(S(T) )}
\]
where $(S,I)$ is the solution to \eqref{SIR} associated to $u$.

\subsection{Optimal immunity control}
\label{se22}
This section is devoted to the analysis results of the optimal control problem \eqref{OCP}.

The first result of this section reduces the study of the complete problem to that of a one dimensional optimization problem, whose unknown denoted $T_0$ stands for a switching time.
For simplicity, given $\alpha\in [0,1]$, $T>0$ and $T_0\in [0,T]$, we define the function  $u_{T_0}\in \cU_{\alpha,T}$ by
\begin{equation}
\label{eq55}
u_{T_0}=\mathds{1}_{[0,T_0]}+\alpha \mathds{1}_{[T_0,T]}+\mathds{1}_{[T,+\infty)}.
\end{equation}
Also, we denote $(S^{T_0}, I^{T_0})$ the solution of \eqref{SIR} with $u=u_{T_0}$.

\begin{theorem}\label{th1}
Let $\alpha\in [0,1)$ and $T>0$. Problem~\eqref{OCP} admits a unique solution $u^*$. Furthermore,
  \begin{enumerate}
  \item[(i)]
   the maximal value 
  $
 S_{\infty,\alpha,T}^*:=\max \{ S_\infty(u): \ u\in \mathcal{U}_{\alpha,T}\} $ is nonincreasing with respect to $\alpha$ and nondecreasing with respect to $T$.
\item[(ii)] there exists a unique $T_0\in[0, T)$ such that $u^*=u_{T_0}$ (in particular, the optimal control is {\it bang-bang}).
\end{enumerate}
\end{theorem}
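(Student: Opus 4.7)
The plan is to establish Theorem~\ref{th1} in four steps: existence, bang-bang structure via Pontryagin's maximum principle (PMP), reduction to a single switch, and monotonicity together with uniqueness. Existence will follow from the direct method applied to the reformulation given just above the statement: Problem~\eqref{OCP} amounts to minimizing $G(S(T), I(T)) := S(T) + I(T) - \frac{1}{\cR_0}\ln S(T)$ over $u \in \cU_{\alpha, T}$. Since $\cU_{\alpha, T}$ is convex and weak-$\ast$ compact in $L^\infty(0, T)$ and the SIR dynamics are bilinear with $(S, I)$ bounded in $[0, 1]^2$, a standard Gronwall argument shows that $u \mapsto (S, I)|_{[0, T]}$ is continuous under weak-$\ast$ convergence, so the minimum is attained. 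The monotonicity of $S^*_{\infty,\alpha,T}$ in item~(i) will follow from the inclusions $\cU_{\alpha', T} \subset \cU_{\alpha, T}$ whenever $\alpha \leq \alpha'$, and $\cU_{\alpha, T} \subset \cU_{\alpha, T'}$ whenever $T \leq T'$.

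Next I will apply Pontryagin's maximum principle. With adjoint $(p_S, p_I)$, transversality $p_S(T) = 1 - \frac{1}{\cR_0 S(T)}$, $p_I(T) = 1$, and Hamiltonian $H = u\beta SI(p_I - p_S) - \nu I p_I$, pointwise minimization yields $u^* = 1$ on $\{\psi < 0\}$ and $u^* = \alpha$ on $\{\psi > 0\}$, with $\psi := p_I - p_S$. A singular arc $\psi \equiv 0$ on an interval $[a, b]$ is ruled out: the identity $\dot\psi = \nu p_I$ then forces $p_I \equiv 0$, hence $(p_S, p_I) \equiv (0, 0)$ on $[a, b]$, and the linearity of the adjoint ODE propagates this identically to $[0, T]$, contradicting $p_I(T) = 1$. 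Since $\psi(T) = \frac{1}{\cR_0 S(T)} > 0$, the control is $\alpha$ in a left-neighborhood of $T$ and any switching time lies strictly inside $[0, T)$.

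The main and most delicate step is to show that the switching function $\phi := \beta SI\psi$ changes sign at most once on $[0, T]$. A direct computation from the state and adjoint dynamics yields the coupled identities
\[
\dot\phi = \nu\beta SI\, p_S, \qquad \dot p_S = -u\,\phi/S,
\]
so $p_S$ is strictly increasing on every maximal $u^* = 1$ arc and strictly decreasing on every maximal $u^* = \alpha$ arc, while $\dot\phi$ carries the sign of $p_S$. Using $\phi(T) = \nu I(T) > 0$ together with the integral representations $\phi(t) = \nu I(T) - \nu\beta\int_t^T SI\,p_S\,ds$ and $p_S(t) = p_S(T) + \int_t^T u\phi/S\,ds$, I plan to rule out any alternating pattern $(\ldots, 1, \alpha, 1, \alpha)$ by showing that the sign of $p_S$ required at each interior switching instant is incompatible with the monotonicity of $p_S$ between these instants. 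This compatibility check is the main obstacle, and will likely require either a dedicated Lyapunov-type functional for the coupled $(\phi, p_S)$-system, or a phase-plane argument exploiting the conservation of $G$ (resp.\ $G_\alpha := S + I - \frac{1}{\alpha\cR_0}\ln S$) along the $u = 1$ (resp.\ $u = \alpha$) arcs.

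Once any optimum is known to have the form $u_{T_0}$ for some $T_0 \in [0, T)$, uniqueness reduces to the scalar problem of minimizing $J(T_0) := G(S^{T_0}(T), I^{T_0}(T))$. Combining the conservation of $G$ on the $u = 1$ phase and of $G_\alpha$ on the $u = \alpha$ phase with the fact that $\nu/\beta = 1/\cR_0$ yields the explicit representation
\[
J(T_0) = G(S_0, I_0) + \frac{1-\alpha}{\alpha\cR_0}\ln\!\left(\frac{S^{T_0}(T)}{S^{T_0}(T_0)}\right),
\]
and I will conclude by proving strict unimodality of $J$: differentiating in $T_0$ and using that $S^{T_0}(T_0)$ and $S^{T_0}(T)$ move respectively along the $u = 1$ and the $u = \alpha$ trajectories should produce a first-order condition whose unique solution in $[0, T)$ is precisely the optimal switching time.
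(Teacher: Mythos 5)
Your overall architecture (existence by the direct method, PMP, exclusion of singular arcs, reduction to a single switch, then a scalar unimodality argument) matches the paper's, and several of your computations are correct and even elegant: the Mayer-form transversality conditions, the exclusion of singular arcs via $\dot\psi=\nu p_I$ on $\{\psi\equiv 0\}$, and the coupled identities $\dot\phi=\nu\beta SI\,p_S$, $\dot p_S=-u\phi/S$ all check out, as does the closed-form expression for $J(T_0)$. However, the proposal has a genuine gap at exactly the step you yourself flag as ``the main obstacle'': ruling out more than one switch. The mechanism you propose --- comparing the sign of $p_S$ forced at each switching instant with the monotonicity of $p_S$ between them --- cannot work, because the two are mutually consistent for \emph{every} alternating pattern. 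Indeed, at a forward-in-time switch from $u=1$ to $u=\alpha$ one needs $\phi$ to cross zero upward, hence $p_S\geq 0$ there; at a switch from $\alpha$ to $1$ one needs $p_S\leq 0$; and on a $u=1$ arc one has $\phi<0$, so $\dot p_S=-\phi/S>0$ and $p_S$ increases from the nonpositive value to the nonnegative one, while on a $u=\alpha$ arc $\phi>0$ makes $p_S$ decrease from nonnegative to nonpositive. Every link of the chain is compatible, so no contradiction arises and arbitrarily many switches survive this test. The paper closes this step by a different device: on a maximal interior $u=1$ arc $(T_1,T_0)$ the switching function takes equal values at both endpoints, Rolle's theorem produces a critical point $\tau$, and the relation $\dot w=-S\,\dot p_I$ shows that $(w,p_I)(\tau)$ lies on the line $\beta w+\nu p_I=0$, which is invariant for the adjoint flow; hence $w$ is constant on $[\tau,T_0]$, contradicting the strict increase of $w$ at $T_0^-$. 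Some argument of this kind (or your hypothetical Lyapunov functional, which you do not exhibit) is indispensable.

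A second, smaller gap: the strict unimodality of $J(T_0)$, which carries the uniqueness claim of the theorem, is only asserted (``should produce a first-order condition whose unique solution\dots''). In the paper this requires computing the derivative $\widehat S=\partial S/\partial T_0$ explicitly (Lemma~\ref{lem:HatS2}), showing that the resulting function $\psi$ of \eqref{eq:psi} is strictly decreasing with $\psi(T)=-1<0$, and only then concluding that $j_\Phi'$ has at most one zero. Without that monotonicity argument, a first-order condition alone does not exclude multiple critical points, so uniqueness of the optimal control is not established. As it stands, your submission is a credible plan with two load-bearing steps missing.
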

The proof of Theorem \ref{th1} is given in Section \ref{se31}.

\begin{tabular}{ll}
\begin{minipage}{3.3cm}
\begin{center}
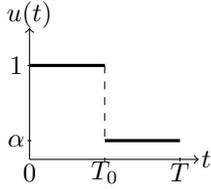

\begin{tikzpicture}[scale=0.25]
\draw[->](0,0) -- (9,0);
\draw[->](0,0) -- (0,7);
\draw[very thick] (0,5) -- (4,5);
\draw[dashed](4,5)--(4,1);
\draw[very thick] (4,1) -- (8,1);
\node at (4,-0.7) {$T_0$};
\node at (0,-0.7) {$0$};
\node at (8,-0.7) {$T$};
\node at (-0.7,1) {$\alpha$};
\node at (-0.7,5) {$1$};
\node at (9.4,0) {$t$};
\node at (0,7.7) {$u(t)$};
\draw(4,-0.1) -- (4,0.1);
\draw(8,-0.1) -- (8,0.1);
\draw(-0.1,5) -- (0.1,5);
\draw(-0.1,1) -- (0.1,1);
\end{tikzpicture}
\captionof{figure}{Opt. control}
\end{center}
%\end{figure}
\end{minipage} & 
\begin{minipage}{8.2cm}
At this step, the result above proves that the optimal control belongs to a family of {\it bang-bang} functions parametrized by the switching time $T_0$. This reduces the study to a rather simple one-dimensional optimization problem, at least from the numerical point of view. 
\end{minipage}
\end{tabular}

We deduce that the switching time associated to the optimal solution of Problem~\eqref{OCP} solves the 1D optimization problem
\begin{equation}
\boxed{\sup_{T_0\in [0,T)}j(T_0)
\tag{$\widetilde\cP_{\alpha,T}$}}
\label{eq:opt T0}
\end{equation}
where
$j(T_0):=S_\infty(u_{T_0}),$
 and $u_{T_0}$ is defined by \eqref{eq55}.
 The  next result characterizes the optimal  $T_0$ for \eqref{eq:opt T0} whenever $\alpha\in [0,1]$.
To state it, one needs to introduce the function $\psi$ given by
\begin{equation}
\label{eq:psi}
\psi : T_0\ni [0,T]\mapsto (1-\alpha) \beta I^{T_0}(T) \int_{T_0}^T \frac{S^{T_0}(t)}{I^{T_0}(t)}\,dt - 1,
\end{equation} 
where $(S^{T_0}, I^{T_0})$ denotes the solution to \eqref{SIR} with $u=u_{T_0}$.

\begin{theorem}\label{th:T0}
Let $T>0$, $\alpha \in [0,1)$ and $T_0^*$ the unique optimal solution to Problem~\eqref{eq:opt T0} (hence $u_{T_0^*}$ is the optimal solution to Problem~\eqref{OCP} with the notation \eqref{eq55}).
One has the following characterization:
\begin{itemize}
\item if $\psi(0)\leq0$, then $T_0^*=0$.
\item if $\psi(0)>0$, then $T_0^*$ is the unique  solution on $(0,T)$ to the equation
  \begin{equation}\label{eq:formula}
\psi(T_0)=0.
  \end{equation} 
  \end{itemize}
 Moreover $S^T(T_0^*)\geqslant S_\herd$, i.e. $T_0^*\leqslant (S^T)^{-1}(S_\herd)$, where,  in agreement with \eqref{eq55}, $S^T$ denotes the solution to System~\eqref{SIR} with $u=u_T\equiv 1$.

In the particular case $\alpha=0$, one has $T_0^*>0$ if, and only if $T> \frac{1}{\nu} \ln \frac{S_0}{S_0-S_\herd}$, and in that case, $T_0^*$ is the unique solution to the equation
\begin{equation*}%\label{eq51}
 S^{T_0}(T_0) = \frac{S_\herd }{1-e^{\nu (T_0-T)}}.
\end{equation*}
Moreover the functional $j$ is increasing on $(0,T_0^*)$, then decreasing on $(T_0^*,T)$ whatever the value of $\alpha\in [0,1]$ be.
\end{theorem}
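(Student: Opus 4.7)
The plan is to reduce Problem~\eqref{eq:opt T0} to studying a single scalar condition encoded by $\psi$, which requires three main steps: an equivalence between maximizing $j$ and maximizing a more tractable functional $J$, a sensitivity computation for $J'$, and an algebraic identity that recasts $J'$ in the $\psi$-form of the statement.

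\textbf{Step 1 (reduction to $J$).} Along any trajectory of \eqref{SIR}, a direct computation shows $\dot H_1(S,I) = -(1-\alpha)\nu I$ on $[T_0,T]$ (where $u=\alpha$) and $\dot H_1 \equiv 0$ elsewhere, with $H_1(S,I) := S + I - \tfrac{1}{\mathcal R_0}\ln S$. Combining with $H_1(S_\infty,0)=H_1(S(T),I(T))$ and the strict monotonicity of $x\mapsto x-\tfrac{1}{\mathcal R_0}\ln x$ on $(0,S_\herd]$ (recall $S_\infty\in[0,S_\herd]$ by \eqref{eq41}), one gets $H_1(S_\infty,0)=H_1(S_0,I_0)-(1-\alpha)\nu J(T_0)$ with $J(T_0):=\int_{T_0}^T I^{T_0}(t)\,dt$. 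Hence maximizing $j(T_0)$ is equivalent to maximizing $J(T_0)$.

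\textbf{Step 2 (formula for $J'$).} I will use needle-variation. Perturbing $T_0\to T_0+\eps$ modifies the initial condition of the $u=\alpha$ phase at time $T_0$ by $\eps(F_1-F_\alpha)(a,b)=\eps(1-\alpha)\beta ab(-1,1)$ (with $a:=S^1(T_0)$, $b:=I^1(T_0)$). Propagating this via the variational equation of the $u=\alpha$ dynamics along $(S^{T_0},I^{T_0})$, one finds
\begin{equation*}
J'(T_0) = -b + (1-\alpha)\beta ab \int_{T_0}^T \delta I(t)\,dt,
\end{equation*}
where $(\delta S,\delta I)$ solves the linearized system with data $(-1,1)$ at $T_0$.

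\textbf{Step 3 (key identity).} The crux is to prove
\begin{equation*}
a\int_{T_0}^T \delta I(t)\,dt = I^{T_0}(T)\int_{T_0}^T \frac{S^{T_0}(t)}{I^{T_0}(t)}\,dt,
\end{equation*}
which gives $J'(T_0)=b\,\psi(T_0)$ with $b>0$, so that $J'$ and $\psi$ share the same sign. The proof runs as follows. From the variational equation and the linearized conservation of $H_\alpha$, one obtains $(\delta S/(SI))'=-\nu/(aI)$, hence $\delta S(t)=-\tfrac{S(t)I(t)}{a}\bigl(\tfrac1b+\nu\int_{T_0}^t d\tau/I(\tau)\bigr)$. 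Using the elementary identity $\nu I(t)\int_{T_0}^t d\tau/I(\tau)-\alpha\beta I(t)\int_{T_0}^t S/I\,d\tau=1-I(t)/b$ (which follows from integrating $(1/I)'=(\nu-\alpha\beta S)/I$) and $(\delta I/I)'=\alpha\beta\delta S$, I will reduce this to $a\,\delta I(t)=S(t)+\dot I(t)\int_{T_0}^t S/I\,ds$. Integration by parts on the last term yields the identity.

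\textbf{Step 4 (uniqueness and conclusion).} Once $J'=b\psi$ is established, the characterization follows. Since $\psi(T)=-1<0$: if $\psi(0)\leq 0$, then $J$ is strictly decreasing on $(0,T)$ and $T_0^*=0$; if $\psi(0)>0$, the intermediate value theorem yields a zero of $\psi$ in $(0,T)$, and uniqueness (plus the increasing–then–decreasing behavior of $j$) follows from the fact that every critical point of $J$ must be a strict local maximum: at $\psi(T_0^*)=0$, one has $J''(T_0^*)=b\,\psi'(T_0^*)$, and a direct differentiation of $\psi$ using the sensitivity formulas from Step~2 shows $\psi'(T_0^*)<0$, which rules out any second zero via the usual sign-chasing argument. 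For the bound $S^T(T_0^*)\geq S_\herd$, I argue by contradiction: if $S^1(T_0^*)<S_\herd$, both $S^{T_0^*}$ and $I^{T_0^*}$ decrease on $[T_0^*,T]$, and a refined estimate using $\psi(T_0^*)=0$ contradicts $b=I^1(T_0^*)$ being on the decreasing branch of $I^1$. Finally, for $\alpha=0$, the $u=0$ dynamics integrates explicitly as $S^{T_0}(t)=a$, $I^{T_0}(t)=be^{-\nu(t-T_0)}$; substituting in $\psi$ gives $\psi(T_0)=(a/S_\herd)(1-e^{-\nu(T-T_0)})-1$, from which both the formula $S^{T_0}(T_0)=S_\herd/(1-e^{\nu(T_0-T)})$ and the threshold $T>\tfrac{1}{\nu}\ln\tfrac{S_0}{S_0-S_\herd}$ (equivalent to $\psi(0)>0$ using $S^1(0)=S_0$) follow.

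The main obstacle will be the identity in Step~3: the bridge from the sensitivity formula, which naturally features $\int\delta I\,dt$, to the much simpler expression $I(T)\int S/I\,dt$ involving only the state trajectory. The other delicate point is establishing $\psi'(T_0^*)<0$ rigorously at critical points, which requires handling second-order sensitivities of both $I(T)$ and $\int S/I\,dt$ with respect to $T_0$.
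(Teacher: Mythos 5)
Your core computation is correct and reaches the same pivotal identity as the paper ($j'$ proportional to $\psi$, with a positive factor $I^{T_0}(T_0)$), but by a genuinely different route. Where the paper exploits the conservation laws to write $S$ as the solution of a scalar autonomous ODE, obtains the closed form $j_\Phi(T_0)=c_0+\frac{\nu}{\beta}\bigl(\frac1\alpha-1\bigr)\ln\bigl(S(T)/S(T_0)\bigr)$ and differentiates the quadrature formula for $S$ (Lemma~\ref{lem:HatS2}), you work with $J(T_0)=\int_{T_0}^T I^{T_0}$, a needle variation, and the linearized flow. The two are consistent: your $\delta S$ satisfies $\widehat S=(1-\alpha)\beta ab\,\delta S$ with $\widehat S$ as in Lemma~\ref{lem:HatS2}, and I checked that your Step~3 identity does hold — the relation $(\delta S/(SI))'=-\nu/(aI)$ follows from the linearized conservation of $\Phi_{\alpha\cR_0}$, the pointwise identity $a\,\delta I(t)=S(t)+\dot I(t)\int_{T_0}^t S/I$ is verified by differentiation using $\nu I\int 1/I-\alpha\beta I\int S/I=1-I/b$, and one integration by parts then gives $a\int\delta I=I(T)\int S/I$ and hence $J'=b\,\psi$. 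Your treatment of $\alpha=0$ (explicit integration of the locked-down phase inside \eqref{eq:psi}) is cleaner than the paper's, which re-derives the condition from the Pontryagin adjoint.

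Two steps are underdeveloped. First, uniqueness: you propose to show $\psi'(T_0^*)<0$ only at critical points via second-order sensitivities, which you yourself flag as the delicate part, and you give no argument for it. The paper proves the stronger and much easier fact that $\psi$ is \emph{globally} decreasing: writing $\varphi(s)=\exp\bigl(\int_s^T(\alpha\beta S-\nu)\bigr)$ one gets $\psi=(\frac1\alpha-1)\bigl(\varphi(T_0)+\nu\int_{T_0}^T\varphi\bigr)-\frac1\alpha$, and differentiating in $T_0$ produces only terms carrying $-\alpha\beta S(T_0)$ or $\int\alpha\beta\widehat S$, all negative since $\widehat S\leq 0$ — a fact you already possess through $\delta S<0$. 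You should replace your local second-order plan by this global first-order argument; it also directly yields the sign of $j'$ on each side of $T_0^*$, i.e.\ the increasing-then-decreasing claim. Second, the bound $S^T(T_0^*)\geq S_\herd$ is only gestured at (``a refined estimate\ldots contradicts\ldots''); no actual inequality is produced, so as written this part of the statement is not proved. The paper derives it by comparing $\Phi_{\cR_0}$ at the herd-immunity crossing time and at $T_0^*$ along the $u\equiv1$ trajectory and using the monotonicity of $x\mapsto x-\frac{1}{\cR_0}\ln x$ on $(0,S_\herd)$; some concrete argument of this kind is needed to close your Step~4.
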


The reduction to the optimal control \eqref{eq:opt T0} and this  characterization of the optimal $T_0^*$ constitute useful tools to refine the numerical search of optimal controls.

\noindent {\bf Ongoing work.} We conclude this part by underlying that the optimal control problem we have solved is a prototypal one.
In particular, it does not appear completely relevant since it does not take into account a cost related to the occupancy of hospital beds and the fact that this number of beds is limited. This leads, as a first approximation, to add to the problem of optimal control a constraint on the total number of infected persons. This will be the main topic of future work. 
 
\subsection{Relations with the minimal time problem}
\label{se23}
We have shown (see Theorem~\ref{prop:Sinf}) that, provided that $\alpha$ is
 sufficiently small, there exists for every $\eps>0$ a time of control $T$ such that, by acting on $[0,T]$, we have  
$\lim_{t\to +\infty} S(t) \geq S_\herd-\eps$, 
i.e.\ at a distance $\eps$ of the herd immunity threshold.
Then, we may wonder what is the minimal time of action such that the final value of susceptible is at a given distance of this threshold.
This amounts to solve the following optimal control problem.\\
\begin{center}
  \fbox{\begin{minipage}{12.5cm}
    \textbf{Minimal time problem: }for $\eps>0$, determine the minimal time of action $T^*>0$ such that
     the optimal final number of susceptible individuals satisfies $$S^*_{\infty,\alpha,T^*} \geqslant S_\herd-\eps.$$
  \end{minipage}
}\end{center}\ 

We recall that $S^*_{\infty,\alpha,T}$ is defined in Theorem \ref{th1}. The following result answers to this question by noting that solving this problem is equivalent to solve Problem~\eqref{OCP}.
\begin{theorem}\label{theo:min time}
  Assume $\alpha \leqslant \overline{\alpha}$ (defined in \eqref{eq:alpha}) and let $\eps>0$.

Let $T^*_\eps>0$ be the solution to the minimal time problem above and denote $u^*_\eps$ the corresponding control function. Then, $u^*_\eps$ is the unique solution of Problem~\eqref{OCP} determined in Theorems~\ref{th1} and \ref{th:T0} associated to $T=T^*$.

Conversely, let $T>0$ and $S_{\infty,\alpha,T}^*$ the maximum of Problem~\eqref{OCP}. Then, $T$ is the minimal time of intervention such that $S_{\infty}(u)\geqslant S_{\infty,\alpha,T}^*$ for some $u\in\mathcal{U}_{\alpha,T}$.
\end{theorem}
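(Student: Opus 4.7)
The plan is to reduce everything to two structural properties of the value function $V:T\mapsto S^*_{\infty,\alpha,T}$, namely (a) strict monotonicity on $(0,+\infty)$, and (b) continuity. Given (a)--(b), the equivalence between the minimal time problem and Problem~\eqref{OCP} follows at once from Proposition~\ref{prop:Sinf}(i), which guarantees under the assumption $\alpha\leq\overline\alpha$ that $V(T)\geq S_\herd-\eps$ for $T$ large enough, and from the uniqueness statement of Theorem~\ref{th1}.

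\textbf{Step 1: Strict monotonicity of $V$.} The fact that $V$ is nondecreasing is Theorem~\ref{th1}(i). For strictness, fix $T_1<T_2$ and let $u^{*,1}=u_{T_0^{*,1}}$ with $T_0^{*,1}\in[0,T_1)$ be the (bang-bang) optimal control of Problem~\eqref{OCP} at $T=T_1$, provided by Theorem~\ref{th1}(ii). Since $u^{*,1}\equiv 1$ on $[T_1,+\infty)$ and $1\in[\alpha,1]$, $u^{*,1}$ is also admissible in $\mathcal U_{\alpha,T_2}$, whence $V(T_2)\geq S_\infty(u^{*,1})=V(T_1)$. If equality held, $u^{*,1}$ would be an optimal control at $T=T_2$; by the uniqueness and bang-bang structure of Theorem~\ref{th1}(ii), $u^{*,1}$ would coincide almost everywhere with $u_{T_0^{*,2}}$ for some $T_0^{*,2}\in[0,T_2)$. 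But $u_{T_0^{*,2}}$ takes the value $\alpha$ on the nonempty interval $(T_0^{*,2},T_2)$, in particular at some time $t>T_1$, while $u^{*,1}\equiv 1$ on $(T_1,+\infty)$. Since $\alpha<1$, this is impossible, hence $V(T_1)<V(T_2)$.

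\textbf{Step 2: Continuity of $V$.} This follows from continuous dependence of the trajectories of \eqref{SIR} with respect to the control. Given $T_n\to T$, write the optimal controls as $u_n=u_{T_0^n}$ with $T_0^n\in[0,T_n)$. Up to subsequence, $T_0^n\to T_0^\infty\in[0,T]$, the controls $u_{T_0^n}$ converge to $u_{T_0^\infty}\in\mathcal U_{\alpha,T}$ in $L^1_{\mathrm{loc}}$, and the associated states converge uniformly on compact sets, so that $V(T_n)\to S_\infty(u_{T_0^\infty})\leq V(T)$. Conversely, given the optimal $u^*=u_{T_0^*}$ for $T$, the controls $u_{\min(T_0^*,T_n)}\in\mathcal U_{\alpha,T_n}$ approximate $u^*$ and yield $\liminf V(T_n)\geq V(T)$. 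Combining both inequalities, $V$ is continuous.

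\textbf{Step 3: Conclusion.} By Proposition~\ref{prop:Sinf}(i), the set $\mathcal T_\eps:=\{T>0:V(T)\geq S_\herd-\eps\}$ is nonempty; by Step~1 it is of the form $[T^*_\eps,+\infty)$, and by Step~2 it is closed, so $T^*_\eps\in\mathcal T_\eps$ and in fact $V(T^*_\eps)=S_\herd-\eps$ (otherwise continuity would contradict the minimality of $T^*_\eps$). For the direct implication, the control $u^*_\eps$ achieves $S_\infty(u^*_\eps)\geq S_\herd-\eps=V(T^*_\eps)$, so it attains the maximum of Problem~\eqref{OCP} with $T=T^*_\eps$ and is therefore the unique optimizer described by Theorems~\ref{th1}--\ref{th:T0}. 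For the converse, given $T>0$ and $M=V(T)$, strict monotonicity (Step~1) gives $V(T')<M$ for every $T'<T$, so no control in $\mathcal U_{\alpha,T'}$ can reach the level $M$, showing that $T$ is indeed the minimal intervention time at level $M$.

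\textbf{Main obstacle.} The only nonroutine point is Step~1: monotonicity of $V$ alone would not preclude a plateau, and ruling out plateaus requires precisely the combination of the bang-bang characterization of Theorem~\ref{th1}(ii) with the condition $\alpha<1$; Step~2, though technical, uses only standard continuous dependence for ODEs.
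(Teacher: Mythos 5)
Your proof is correct, and its engine is the same as the paper's: the decisive observation in both arguments is that the unique optimizer of \eqref{OCP} on horizon $T$ has the bang-bang form $u_{T_0}$ with $T_0<T$, hence equals $\alpha<1$ on a nonempty interval ending at $T$, which is incompatible with a control that is identically $1$ near $T$. The paper deploys this observation directly in the converse part (extending a minimal-time control by $1$ on $(T^*,T)$ and invoking uniqueness), and handles the direct implication by a two-line contradiction. You instead package the observation as strict monotonicity of the value function $V(T)=S^*_{\infty,\alpha,T}$, add continuity of $V$ by continuous dependence, and read off both implications from these two properties. The routes are equivalent in substance, but yours buys something the paper leaves implicit: closedness of $\{T: V(T)\geq S_\herd-\eps\}$, hence that the minimal time is actually attained and that $V(T^*_\eps)=S_\herd-\eps$; the paper's proof starts from ``let $T^*$ be the minimal time'' without justifying that the infimum is a minimum. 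The cost is that your Step~2, while standard, carries the technical burden of passing to the limit in $T_0^n$ and in $S_\infty$ through the implicit equation of Lemma~\ref{co1}; the paper avoids this entirely. Both are acceptable; yours is the more self-contained of the two.
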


\section{Proofs of the main results}
\label{se3}

\subsection{Preliminary results}
Before proving the main results,
 we will provide some useful elementary properties of the state variables $(S,I)$ solving \eqref{SIR} whose role in the sequel will be central. To this aim, it is convenient to introduce the function $\Phi_\gamma $ defined for any $\gamma>0$ 
%and any nonnegative variable $(S,I)$ by 
\begin{equation}
\label{eq40}
\Phi_\gamma :\RR_+^*\times \RR_+\ni (S,I) \longmapsto S + I - \frac{1}{\gamma} \ln S.
\end{equation}

Let us start with a preliminary result regarding the value of $\Phi_\gamma$ along the trajectories of System~\eqref{SIR}.
\begin{lemma}
\label{le1}
For any $u\in L^\infty([0,+\infty),[0,1])$ and $\gamma\in\RR$, one has
\begin{equation}
\label{eq34}
\frac{d}{dt} \left[
\Phi_\gamma(S(t),I(t))
\right]
= \left(
\frac{\beta}{\gamma} u(t) - \nu
\right) I
\end{equation}
along any trajectory of system \eqref{SIR}.
In particular, if $u$ is constant on a non-empty, possibly unbounded, interval, then the function $\Phi_{\cR_0 u}(S(\cdot),I(\cdot))$ is constant on this interval along any trajectory of System~\eqref{SIR}.
\end{lemma}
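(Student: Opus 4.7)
The plan is to do a direct computation, applying the chain rule to $\Phi_\gamma(S(t),I(t))$ and substituting the dynamics from \eqref{SIR}. There is essentially no obstacle here; the whole content is identifying the right algebraic cancellation.

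First I would write, for a trajectory $(S,I)$ of \eqref{SIR} and any $\gamma>0$,
\begin{equation*}
\frac{d}{dt}\Phi_\gamma(S(t),I(t))
= \dot S(t) + \dot I(t) - \frac{1}{\gamma}\,\frac{\dot S(t)}{S(t)},
\end{equation*}
which is licit since $S(t)>0$ along the trajectory (the equation $\dot S = -u\beta SI$ shows that $S$ stays positive as soon as $S_0>0$, by a simple Gronwall/exponential argument: $S(t)=S_0\exp(-\beta\int_0^t u(s)I(s)\,ds)$). Next, using \eqref{SIR} I would substitute
\begin{equation*}
\dot S(t)+\dot I(t) = -\nu I(t),\qquad \frac{\dot S(t)}{S(t)} = -u(t)\beta I(t),
\end{equation*}
which immediately gives
\begin{equation*}
\frac{d}{dt}\Phi_\gamma(S(t),I(t))
= -\nu I(t) + \frac{u(t)\beta}{\gamma}\,I(t)
= \left(\frac{\beta}{\gamma}u(t) - \nu\right)I(t),
\end{equation*}
which is the announced identity \eqref{eq34}.

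For the second statement, suppose $u(t)\equiv c$ for some constant $c\in(0,1]$ on a (possibly unbounded) interval $J$. Choosing $\gamma = \cR_0 c = \beta c/\nu$, the factor in the right-hand side of \eqref{eq34} becomes
\begin{equation*}
\frac{\beta}{\gamma}u(t) - \nu = \frac{\beta c}{\beta c/\nu} - \nu = \nu - \nu = 0,
\end{equation*}
so that $\frac{d}{dt}\Phi_{\cR_0 u}(S(t),I(t)) = 0$ on $J$, whence $\Phi_{\cR_0 u}(S(\cdot),I(\cdot))$ is constant on $J$. (The case $c=0$ is degenerate since $\Phi_0$ is undefined, but then $\dot S\equiv 0$ on $J$ and the trajectory is trivial on that interval, so no conservation law is needed.) This concludes the proof.
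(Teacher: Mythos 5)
Your proof is correct and follows essentially the same direct computation as the paper: apply the chain rule to $\Phi_\gamma$, substitute the dynamics of \eqref{SIR}, and observe the cancellation when $\gamma=\cR_0 u$. Your added remarks on the positivity of $S$ and the degenerate case $u\equiv 0$ are sensible refinements the paper leaves implicit, but they do not change the argument.
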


\begin{proof}%[Proof of Lemma~\ref{le1}]
The proof of \eqref{eq34} follows from straightforward computations. Indeed along every trajectory of \eqref{SIR}, one has
\begin{eqnarray*}
\frac{d}{dt} \left[
\Phi_\gamma(S(t),I(t))
\right]
& = &
\frac{d}{dt}\left(
S + I -\frac{1}{\gamma} \ln S
\right)
= \left(
1- \frac{1}{\gamma S}
\right)\dot S + \dot I
%& = &
%- u(t) \beta SI\left(
%1- \frac{1}{\gamma S}
%\right) + u(t)\beta SI - \nu I
= \left(
\frac{\beta}{\gamma} u(t) - \nu
\right) I.
\end{eqnarray*}
The second part of the statement is an obvious byproduct of this property, by setting $\gamma=\mathcal R_0u$.
\end{proof}

Lemma \ref{le1} allows to characterize the value of the limit of $S$ at infinity, as now stated.

\begin{lemma}
\label{co1}
Let $u\in\cU_{\alpha, T}$. For any trajectory of \eqref{SIR}, the limit $S_\infty(u)$ of $S(t)$ at infinity exists and is the unique solution in the interval $[0,1/\cR_0]$ of the equation
\begin{equation}
\label{eq38}
\Phi_{\cR_0}(S_\infty,0) = \Phi_{\cR_0}(S(T),I(T)),
\end{equation}
where $\Phi_{\cR_0}$ is given by \eqref{eq40}.
\end{lemma}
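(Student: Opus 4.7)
The existence of $S_\infty(u):=\lim_{t\to+\infty}S(t)$ and the fact that $\lim_{t\to+\infty}I(t)=0$ have already been recorded in \eqref{eq41}, so only the characterization \eqref{eq38} and its uniqueness remain. The key observation is that every $u\in\cU_{\alpha,T}$ is identically equal to $1$ on $[T,+\infty)$, so Lemma~\ref{le1} applied with $\gamma=\cR_0$ (which makes the factor $\beta/\gamma-\nu$ vanish) yields $\frac{d}{dt}\Phi_{\cR_0}(S(t),I(t))=0$ for $t\geq T$. Integrating between $T$ and $t$ gives the conservation law
\[
\Phi_{\cR_0}(S(t),I(t))=\Phi_{\cR_0}(S(T),I(T)),\qquad t\geq T.
\]

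Next, I would pass to the limit $t\to+\infty$ in this identity. The only subtle point is the logarithmic term in $\Phi_{\cR_0}$, which requires controlling $S_\infty$ away from zero. For $t\geq T$, dividing $\dot S=-\beta SI$ by $S$ and integrating gives $\ln S(t)=\ln S(T)-\beta\int_T^t I(s)\,ds$. Since $\nu\int_T^{+\infty}I(s)\,ds=\lim_{t\to\infty}R(t)-R(T)$ is finite (bounded by $1$), the integral converges and therefore $S_\infty=S(T)\exp\bigl(-\beta\int_T^{+\infty}I(s)\,ds\bigr)>0$. Combined with $I(t)\to0$, continuity of $\Phi_{\cR_0}$ at $(S_\infty,0)$ yields \eqref{eq38}.

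For uniqueness in $[0,1/\cR_0]$, I would study the real function $\phi:s\mapsto s-\frac{1}{\cR_0}\ln s$ on $(0,1]$. Its derivative $\phi'(s)=1-\frac{1}{\cR_0 s}$ is negative on $(0,1/\cR_0)$ and positive on $(1/\cR_0,1]$, so $\phi$ is strictly decreasing on $(0,1/\cR_0]$, and in particular injective there. Since $s\mapsto\Phi_{\cR_0}(s,0)=\phi(s)$, this shows that \eqref{eq38} admits at most one solution in $(0,1/\cR_0]$; that $S_\infty$ lies in this interval has already been noted in \eqref{eq41}, which concludes.

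\textbf{Expected obstacle.} There is no real obstacle: the argument is a bookkeeping use of Lemma~\ref{le1} together with the classical strict positivity of $S_\infty$ in the SIR model. The only point deserving care is justifying that the logarithm can be evaluated at $S_\infty$, which reduces to the elementary estimate above showing $S_\infty>0$.
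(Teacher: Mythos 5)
Your proof is correct and follows essentially the same route as the paper: conservation of $\Phi_{\cR_0}$ on $[T,+\infty)$ via Lemma~\ref{le1}, passage to the limit using $I(t)\to 0$, and strict monotonicity of $s\mapsto s-\frac{1}{\cR_0}\ln s$ on $(0,1/\cR_0]$ for uniqueness. Your explicit verification that $S_\infty>0$ (through the convergence of $\int_T^{\infty}I$) is a welcome detail the paper leaves implicit, and your appeal to \eqref{eq41} for $S_\infty\leq S_\herd$ is legitimate because the dynamics on $[T,+\infty)$ coincides with the uncontrolled system, although the paper chooses to re-derive that bound inside the proof by a contradiction argument.
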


\begin{proof}
Any input control $u$ from $\cU_{\alpha,T}$ is equal to 1 on $[T,+\infty)$.
Hence, applying Lemma~\ref{le1} with $u=1$ on this interval yields \eqref{eq38}, by continuity of $\Phi_{\cR_0}$ and because of \eqref{eq41}.
Moreover, Eq.~\eqref{eq38} has exactly two roots. Indeed, this follows by observing that the mapping $S\mapsto \Phi_{\cR_0}(S,0)$
is first decreasing and then increasing on $(0,+\infty)$, with infinite limit at $0^+$ and $+\infty$, and minimal value at $S=1/\cR_0=S_\herd$, equal to $\frac{1}{\cR_0} (1+\ln \cR_0)$. We conclude by noting that the limit $S_\infty(u)$ cannot be larger than $S_\herd$: otherwise there would exist $\eps>0$ such that $\dot I>\eps>0$ and $\dot S<-\eps\beta S$ for $T$ large enough, so that $S$ would tend to zero at infinity, yielding a contradiction.
It follows that the value of $S_\infty(u)$ is thus the smallest root of \eqref{eq38}.
\end{proof}

\begin{remark}[On the control in infinite time]
Notice that, in the quite unrealistic situation where one is able to act on System~\eqref{SIR} up to an infinite horizon of time, then the optimal strategy to maximize $S_\infty(u)$ is to consider the constant control function $u_\alpha(\cdot)=\alpha$ on $[0,+\infty)$.
Indeed, according to Lemma~\ref{le1},
  \[
  \frac{d}{dt}[\Phi_{\alpha\cR_0}(S(t),I(t))]
  = \left(
  \frac{\beta}{\alpha\cR_0} u(t) -\nu
  \right) I(t)
  = \frac{\nu}{\alpha} (u(t)-\alpha)I(t) \geq 0,
  \]
 for all $t\in [0,T]$.
  Hence,
  $
 \Phi_{\alpha\cR_0}(S_0,I_0) \leq \Phi_{\alpha\cR_0}(S_\infty(u),0),
  $ with equality if, and only if $u=u_\alpha=\alpha$ a.e.\ on $\RR_+$.
  Since $S_\infty(u)$ is maximal whenever $\Phi_{\alpha\cR_0}(S_\infty(u),0)$
  is minimal, we get that the optimal strategy in that case corresponds to the choice $u=u_{\alpha}=\alpha$ a.e.\ on $\RR_+$.
 Moreover, it is notable that this maximal value $S_\infty(u_\alpha)$ is computed by solving the nonlinear equation $\Phi_{\alpha\cR_0}(S_\infty,0) = \Phi_{\alpha\cR_0}(S_0,I_0)$. Therefore, an easy application of the implicit functions theorem
  %$\alpha S_\infty - \nu \ln S_\infty = \alpha (S_0+I_0) - \nu \ln S_0$,
yields that the mapping $\alpha\mapsto S_\infty(u_\alpha)$ is decreasing.
 %Obviously, imposing a complete lockdown for an infinite horizon of time is unrealistic.
\end{remark}

\subsection{Proof of Proposition~\ref{prop:Sinf}}\label{sec:proof prop}
 Let us start with $(iii)$.
 According to \eqref{SIR}, one has $\dot I\geq -\nu I$, thus $I(t)\geq I_0 e^{-\nu t}>0$ for all $t>0$. Then, $\dot S<0$ and $S$ is decreasing.
 Assume by contradiction that, for all $t>0$, we have $S_\herd < S(t)$.
 Since $I$ satisfies \eqref{SIR}, it follows that $\dot I>0$ on $(T,+\infty)$ and then, $\dot S\leq -\beta I(T) S$ on $(T,+\infty)$.
 We thus infer that $S(t)\leq e^{- \beta I(T) (t-T)} S(T)$ for $t>T$, and thus $S(t) \to 0$ as $t\to +\infty$, which is a contradiction.

Let us now show $(i)$. For all $\alpha\in[0,1)$, we denote by 
$u_\alpha(\cdot)$ the control equal to $\alpha$  on $(0,\infty)$.
  Thanks to the same argument by contradiction as above, one has $S_{\infty}({u_{\overline{\alpha}}})\leqslant 1/(\overline{\alpha}\mathcal{R}_0)$. 
Let us denote by $(S^{u_{\overline{\alpha}}},I^{u_{\overline{\alpha}}})$ the solution to System~\eqref{SIR} associated to $u_{\overline{\alpha}}$.
  Lemma~\ref{le1} shows that the function $t\mapsto \Phi_{\overline{\alpha}\cR_0}(S^{u_{\overline{\alpha}}}(t),I^{u_{\overline{\alpha}}}(t))$
 is conserved, and we infer that $S_\infty({u_{\overline{\alpha}}})$ solves the equation
  \[
    \Phi_{\overline{\alpha}\cR_0} (S_0,I_0)=  \Phi_{\overline{\alpha}\cR_0} (S_\infty({u_{\overline{\alpha}}}),0).
  \]
  Using the expression of $\overline{\alpha}$, 
  \[
    \Phi_{\overline{\alpha}\cR_0} (S_\herd,0)= \Phi_{\overline{\alpha}\cR_0} (S_0,I_0)=\Phi_{\overline{\alpha}\cR_0} (S_\infty({u_{\overline{\alpha}}}),0).
  \]
Since  $x\mapsto \Phi_{\overline{\alpha}\cR_0} (x,0)$ is bijective on $(0,\frac{1}{\overline{\alpha} \cR_0})$, we deduce that $S_\infty({u_{\overline{\alpha}}})=S_\herd$.
It follows that for $\eta>0$ small enough, there exists $T>0$ such that for each $t>T$, $\dot I^{u_{\overline{\alpha}}}(t)\leqslant (\beta \overline{\alpha} (1+\eta)\frac{\nu}{\beta}-\nu)I^{u_{\overline{\alpha}}}(t)<0$. By using a Gronwall lemma, one infers that $I^{u_{\overline{\alpha}}}(t) \rightarrow 0$ as $t\to+\infty$. 
%{\color{blue} we have $S^{u_{\overline{\alpha}}}(t)\leqslant S_\herd(1+1/\overline{\alpha})/2$, thence $I^{u_{\overline{\alpha}}}(t) \rightarrow 0$ when $t$ goes to $+\infty$.}
Then, for all $k\in \mathbb{N}^*$, there exists $T_k>0$, such that $|S^{u_{\overline{\alpha}}}(T_k)-S_\herd|\leqslant 1/k$ and $I^{u_{\overline{\alpha}}}(T_k)\leqslant 1/k$. Consider $u_k:=\overline{\alpha}\mathds{1}_{(0,T_k)}+\mathds{1}_{(T_k,\infty)}$
and let us denote by $(S^{u_{k}},I^{u_{k}})$ the solution to System~\eqref{SIR} associated to $u_{k}$.
 By continuity of $\Phi_{\cR_0}$,
  \[
    \Phi_{\cR_0} (S_\infty(u_k),0) = \Phi_{\cR_0} (S^{u_k}(T_{k}),I^{u_k}(T_{k}))\underset{k\rightarrow\infty}{\longrightarrow}\Phi_{\cR_0} (S_\herd,0).
  \]
Thus $S_\infty(u_k)\underset{k\rightarrow\infty}{\longrightarrow}S_\herd$.

  Let us finally prove $(ii)$. We show that $\alpha\mapsto S_\infty(u_\alpha)$ is strictly decreasing.  Let $\alpha_1,\alpha_2\in[0,1)$ such that $\alpha_1<\alpha_2$ 
 and  $(S^{u_{\alpha_1}},I^{u_{\alpha_1}})$ and $(S^{u_{\alpha_2}},I^{u_{\alpha_2}})$ the solutions to System~\eqref{SIR} associated to $u_{\alpha_1}$ and $u_{\alpha_2}$, respectively.
 Using Lemma \ref{le1},  
 $t\mapsto \Phi_{ \alpha_1\cR_0} (S^{u_{\alpha_2}}(t),I^{u_{\alpha_2}}(t))$ is strictly increasing, hence
 $$\Phi_{{ \alpha_1}\cR_0}(S_{\infty}(u_{\alpha_1}),0)=\Phi_{{ \alpha_1}\cR_0}(S_0,I_0)<\Phi_{{ \alpha_1}\cR_0}(S_{\infty}(u_{\alpha_2}),0).$$
 Thanks to the equations satisfied by $(S^{u_{\alpha_1}},I^{u_{\alpha_1}})$ and $(S^{u_{\alpha_2}},I^{u_{\alpha_2}})$, one has $S_{\infty}(u_{\alpha_1})$, $S_{\infty}(u_{\alpha_2})\leqslant S_\herd/\alpha_1$. Since 
 $x\mapsto \Phi_{{ \alpha_1}\cR_0}(x,0)$ in strictly decreasing on $(0,S_\herd/\alpha_1)$, we deduce that $S_{\infty}(u_{\alpha_2})<S_{\infty}(u_{\alpha_1})$.
This concludes the proof since $S_\infty(u_{\overline{\alpha}})=S_\herd$.

%This achieves the proof of Proposition~\ref{prop:Sinf}.

\subsection{Proof of Theorem~\ref{th1}}
\label{se31}

Solving \eqref{OCP} involves the resolution of an ODE system on an infinite horizon, and it is quite convenient to consider an equivalent version of this problem involving an ODE system on a bounded horizon.
A key point for this is that, according to Lemma~\ref{co1}, $S_\infty$ solves Eq.~\eqref{eq38}.
Furthermore, since the mapping $[0,1/\mathcal R_0]\ni S\mapsto \Phi_{\cR_0}(S,0) $ is decreasing, maximizing $S_\infty$ is equivalent to minimize $ \Phi_{\cR_0}(S_\infty,0) $.
Combining all these observations yields that the optimal control problem is equivalent to the following version we will investigate hereafter:
\begin{equation}\tag{$\mathcal{P}_{\alpha,T}^{\Phi}$}
\label{OCP2}
\inf_{u\in\cU_{\alpha, T}} J_\Phi(u),
\end{equation}
where 
\begin{equation*}
J_\Phi(u):=\Phi_{\cR_0}(S(T),I(T))
\end{equation*}
and $(S,I)$ solves the controlled system~\eqref{SIR} associated to the control function $u(\cdot)$.

\begin{proof}[Proof of Theorem \ref{th1}]

For better readability, the proof of Theorem \ref{th1} is decomposed into several steps.

\paragraph{Step 1: existence of an optimal control}
We will prove the existence of an optimal control for the equivalent problem~\eqref{OCP2}.
Let $(u_n)_{n\in \NN}$ be  a maximizing sequence for Problem~\eqref{OCP2}.
Since $(u_n)_{n\in \NN}$ is uniformly bounded, we may extract a subsequence still denoted $(u_n)_{n\in \NN}$ with a slight abuse of notation, converging towards $u^*$ for the weak-star topology of $L^\infty(0,T)$. It is moreover standard that $ \mathcal{U}_{\alpha,T}$ is closed for this topology and therefore, $u^*$ belongs to $ \mathcal{U}_{\alpha,T}$. For $n\in\NN$, let us denote $(S_n,I_n)$ the solution to the SIR model \eqref{SIR} associated to $u=u_n$. A straightforward application of the Cauchy-Lipschitz theorem yields that $(\dot{S}_n,\dot{I}_n)_{n\in\NN}$ is uniformly bounded. By applying Ascoli's theorem, we may extract a subsequence still denoted $(S_n,I_n)_n$ that converges towards $(S^*,I^*)$ in $C^0([0,T])$. As usually, we consider an equivalent formulation of System~\eqref{SIR}, where $(S_n,I_n)$ can be seen as the unique fixed point of an integral operator. We then pass to the limit and show that $(S^*,I^*)$ solves the same equation where $u$ has been replaced by $u^*$. By combining all these facts with the continuity of $\Phi_{\mathcal{R}_0}$, we then infer that $(J_\Phi(u_n))_{n\in\NN}$ converges up to a subsequence to $J_\Phi(u^*)$, which gives the existence. 

\paragraph{Step 2: optimality conditions and bang-bang property}
We will again establish these properties for the equivalent problem~\eqref{OCP2}.
In what follows, for the sake of simplicity, we will consider and denote by $u$ a solution to Problem~\eqref{OCP2} and by $(S,I)$, the associated pair solving System~\eqref{SIR}. Observe first that integrating \eqref{eq34} in Lemma~\ref{le1}, one has 
\begin{equation*}
J_\Phi(u) = \Phi_{\cR_0}(S(T),I(T)) = \Phi_{\cR_0}(S_0,I_0)
+\nu \int_0^T (u(t)-1)I(t)\ dt.
\end{equation*}

It is standard to write the first order optimality conditions for such kind of optimal control problem. To this aim, we use the so-called Pontryagin maximum principle (see e.g. \cite{LeeMarkus}) and introduce the Hamiltonian $\mathcal H$ defined on $\RR^4$ by 
\begin{eqnarray*}
\cH(S,I,p_S,p_I) &=& \left[
-p_S\beta  uS +  p_I (\beta uS-\nu) - \nu (u-1)
\right] I\nonumber \\
&=& \nu (1 - p_I)I + (\beta(p_I-p_S)S-\nu)Iu.
\end{eqnarray*}
There exists an absolutely continuous mapping $(p_S,p_I):[0,T]\to \RR^2$ called adjoint vector such that the extremal $((S,I),(p_S,p_I),u)$ satisfies a.e. in $[0,T]$:\\
\noindent {\it Adjoint equations and transversality conditions:}
\begin{subequations}
\begin{gather}
\label{eq7a}
\dot p_S = \beta uI(p_S-p_I),\qquad
\dot p_I = \beta uSp_S - (\beta uS-\nu )p_I + \nu (u-1), \\
\label{eq7b}
p_S(T) = 0,\qquad p_I(T) = 0.
\end{gather}
\end{subequations}
 \noindent {\it Maximization condition:} for a.e. $t\in [0,T]$, $u(t)$ solves the problem
$$
\max_{\alpha\leq v\leq 1}(\beta(p_I(t)-p_S(t))S(t)-\nu)I(t)v
$$
and therefore, by using that $I$ is nonnegative on $[0,T]$, one has
\begin{equation}\label{eq1847}
w  > -\frac{1}{\cR_0} \text{ on } \{u=\alpha\},\
w   < -\frac{1}{\cR_0} \text{ on } \{u=1\},\ 
w =- \frac{1}{\cR_0} \text{ on } \{\alpha <u<1\},
\end{equation}
where $w$ denotes the Lipschitz-continuous switching function given by $w=S(p_S-p_I)$.

By using \eqref{SIR}, one computes
\begin{eqnarray*}
 \dot w & =& -\beta uSI(p_S-p_I)+S(-\beta uI(p_I-p_S)+\beta uS(p_I-p_S)-\nu p_I - \nu(u-1))\nonumber \\
& =& -S\left(\beta u w + \nu  (p_I+u-1)\right).
\end{eqnarray*}

We will now prove that the optimal control can be written as $u_{T_0}$ defined in \eqref{eq55}, for some $T_0\in [0,T)$.
From \eqref{eq7b}, one has $p_S(T)=p_I(T)=0$, and thus $w(T)=0 > -\frac{1}{\cR_0}$. According to \eqref{eq1847}, this implies $u(\cdot)=\alpha$ on a certain {\em maximal} interval $[T_0,T]$, for some $T_0\in (0,T)$, by continuity of $w$.
By inserting the relation $w\geq -\frac{1}{\cR_0}$ holding on $(T_0,T)$ in the equation \eqref{eq7a} satisfied by $p_I$, we deduce that $\dot{p}_I \geq \nu (p_I -1)$ on $(T_0,T)$.
Since $ p_I(T) = 0$, the Gronwall lemma yields
\begin{equation}
\label{eq23}
p_I(t) \leq 1- e^{\nu(t-T)}, \quad t\in [T_0,T].
\end{equation}
Then, either $T_0=0$, in which case, $u = u_0= \alpha \mathds{1}_{[0,T]}+\mathds{1}_{[T,+\infty)}$(see \eqref{eq55}) or $T_0>0$.
Let us now address this latter case.
Since the interval $(T_0,T)$ is maximal by assumption and $w$ is continuous, one has necessarily $w(T_0)=-1/\cR_0$.
On the other hand, $S$ and $p_I$ are continuous, with $p_I(T_0)<1$ according to \eqref{eq23}.
Consequently, for any $\varepsilon$ in the non-empty open interval $(0, \nu S(T_0)(1-p_I(T_0))$, there exists a neighborhood $\mathcal V_{T_0}$ of $T_0$ on which
\begin{eqnarray*}
\dot w &=& -\beta uS\left(
w+\frac{1}{\cR_0}
\right) + \nu S(1- p_I)\nonumber \\
& \in & [\nu S(T_0)(1- p_I(T_0)) - \varepsilon, \nu S(T_0)(1-p_I(T_0)) + \varepsilon]
\quad \text{a.e.}
\end{eqnarray*}
Since $\varepsilon\in (0,\nu S(T_0)(1- p_I(T_0))$, this implies that $w$ is strictly increasing in $\mathcal V_{T_0}$.
Therefore, there exists a {\em maximal} open interval $(T_1,T_0)$ with  $T_1\in[0,T_0)$ on which $w<-\frac{1}{\cR_0}$, and therefore on which $u= 1$.
As a consequence, the left-derivative of $w$ at $T_0$ exists and reads
\begin{equation}
\label{eq22}
\dot w(T_0^-) = -\beta S(T_0) \left(
w(T_0)+\frac{1}{\cR_0} p_I(T_0)
\right) >0.
\end{equation}
We will in fact show that $T_1=0$. In other words, the control $u$ can be written as \eqref{eq55}.
%\eqref{eq:u opt}.

To this aim, let us assume by contradiction that there exists $T_1\in (0,T_0)$ such that $w(T_1) = -\frac{1}{\cR_0} = w(T_0)$ and $w< -\frac{1}{\cR_0}$ on $(T_1,T_0)$.
Observing that $w$ is differentiable on $(T_1,T_0)$ and using Rolle's theorem yields the existence of $\tau\in (T_1,T_0)$ such that $\dot w(\tau)=0$.

Note that, according to \eqref{eq7a}, one has $\dot{w} = - S\dot p_I$, a.e.\ in $(0,T)$.
Since $S(\tau)>0$, one has also $\dot p_I (\tau)=0$.
Using the fact that $u=1$ on $(T_1,T_0)$, this means that the point $(p_I(\tau),w(\tau))$ is a {\em steady-state} of the system
\begin{equation*}
\dot w = -\beta S
w - \nu S p_I,\qquad
\dot p_I = \beta w +\nu p_I .
\end{equation*}
According to the Cauchy-Lipschitz theorem, we infer that $(p_I,w)$ is constant on $[\tau,T_0]$ and therefore, $\dot p_I(T_0^-)= \dot w(T_0^-)=0$, which is in contradiction with \eqref{eq22}.
As a conclusion, $T_1=0$, and $u$ can be written as \eqref{eq55}.

\paragraph{Step 3: monotonicity of $S_{\infty,\alpha,T}^*$}
Let $T>0$ and $0\leq \alpha \leq \tilde{\alpha} < 1$. It is straightforward that $\mathcal{U}_{\tilde{\alpha},T} \subset \mathcal{U}_{\alpha,T}$, and then $S_{\infty,\alpha,T}^* \geqslant S_{\infty,\tilde\alpha,T}^*$. It follows that the map $[0,1)\ni \alpha \mapsto \max_{u\in \mathcal{U}_{\alpha,T}} S_{\infty}(u)$ is nonincreasing.

Let us show that the map $T \mapsto \max_{u\in \mathcal{U}_{\alpha,T}} S_{\infty}(u)$ is nondecreasing.
Let $0<T\leq \tilde{T}$, $0\leq \alpha <1$, and denote by $u^*\in \mathcal{U}_{\alpha,T}$ the control realizing the maximum $S_{\infty,\alpha,T}^*$.
Since $u^*=1$ in $(T,\tilde{T})$, one has $u^*\in \mathcal{U}_{\alpha,\tilde T}$. 
Thus
\[
  S_{\infty,\alpha,\tilde T}^* \geqslant S_{\infty}(u^*)=S_{\infty,\alpha,T}^*.
\]

\paragraph{Step 4: uniqueness of the optimal control}
The demonstration of the uniqueness of the optimal control is achieved in the proof of Theorem~\ref{th:T0} below, by demonstrating the uniqueness of the optimal switching point $T_0$.
Except this point, the demonstration of Theorem~\ref{th1} is now complete.
\end{proof}

\subsection{Proof of Theorem~\ref{th:T0}}
Let us decompose this proof into several steps. 
We assume that $\alpha>0$, the case $\alpha=0$ will be considered in the last step. In the whole proof, we will deal with control functions $u_{T_0}$ as defined in formula \eqref{eq55}.

\paragraph{Step 1: necessary first order optimality conditions on $T_0$}
Let $u=u_{T_0}$ be an optimal control for problem~\eqref{OCP}, with $T_0$ be the associated optimal switching time.
%Let $u$ be an optimal control for problem~\eqref{OCP}, having thus the same form as above, and $T_0$ be the associated optimal switching time.
Let us introduce the criterion $j_{\Phi}$ given by
\[
j_{\Phi}(T_0):=\Phi_{\mathcal{R}_0}(S^{T_0}(T),I^{T_0}(T))=I^{T_0}(T)+S^{T_0}(T) - \frac{\nu}{\beta} \ln S^{T_0}(T),
\]
where $(S^{T_0},I^{T_0})$ is the solution corresponding to the control $u_{T_0}$, as previously defined.
For the sake of simplicity, we omit these subscripts in the sequel.
Again, it is equivalent to maximize $j$ and to minimize $j_{\Phi}$.
By using Lemma~\ref{co1}, one has
\begin{equation}\label{m1440}
\begin{array}{ll}
I(t)+S(t)- \frac{\nu}{\beta} \ln S(t)=c_0 & \text{in }[0,T_0],\\
I(t)+S(t)-\frac{\nu}{\alpha\beta} \ln S(t)=I(T_0)+S(T_0)-\frac{\nu}{\alpha\beta} \ln S(T_0) & \text{in }[T_0,T],
\end{array}
\end{equation}
where $c_0=I_0+S_0- \frac{\nu}{\beta} \ln S_0$.
According to \eqref{m1440}, we infer that $S$ solves the system
\begin{subequations}\label{meq11699}
\begin{align}
& \dot S=-\beta S(c_0-S+ \frac{\nu}{\beta} \ln S), \qquad \text{in }(0,T_0), \label{S1a}\\
& \dot S=-\alpha\beta S\left(c_0+\frac{\nu}{\beta}\left(1-\frac{1}{\alpha} \right)\ln S(T_0)-S+\frac{\nu}{\alpha\beta} \ln S\right), \qquad \text{in }(T_0,T),  \label{S1b}
\end{align}
\end{subequations}
with the initial data $S(0)=S_0$.
Using \eqref{m1440}, one gets 
\begin{eqnarray*}
j_{\Phi}(T_0) &=& I(T)+S(T)- \frac{\nu}{\beta} \ln S(T)\\
&=& I(T_0)+S(T_0)-\frac{\nu}{\alpha\beta} \ln S(T_0)+\frac{\nu}{\beta}\left(\frac{1}{\alpha}-1\right) \ln S(T)\\
       &=& c_0+ \frac{\nu}{\beta} \ln S(T_0)-\frac{\nu}{\alpha\beta}\ln S(T_0)+\frac{\nu}{\beta}\left(\frac{1}{\alpha}-1\right) \ln S(T),
\end{eqnarray*}
so that the cost function reads
\[
  j_{\Phi}(T_0) = c_0+\frac{\nu}{\beta}\left(\frac{1}{\alpha}-1\right) \ln \left(\frac{S(T)}{S(T_0)}\right).
\]

The next lemma allows to compute the derivative of $j_\Phi$ with respect to $T_0$. For the sake of clarity, its proof is postponed to the end of this section.
\begin{lemma}\label{lem:HatS2}
For all $t\in [T_0,T]$, the derivative $\widehat{S}(t)$ and $\widehat{S(T_0)}$ of the function $S(\cdot)$ and $S(T_0)$  with respect to $T_0$\footnote{To avoid any misunderstanding about the differentiability of $S(\cdot)$ with respect to $T_0$, let us make the use of $\widehat{S}$ precise. This function stands for the derivative of the function $[0,T]\ni T_0\mapsto S(\cdot,T_0)\in C^0 ([T_0,T])$, where $S(\cdot,T_0)$ is defined as the unique solution to \eqref{S1b} on $[T_0,T]$, where $S(T_0)$ is defined as the value at $T_0$ of the unique solution to \eqref{S1a}.
Defined in this way, the differentiability of this mapping is standard.}, in other words $\widehat{S}(t) = \frac{\partial S(t)}{\partial T_0}$ and $\widehat{S(T_0)} = \frac{\partial [S(T_0)]}{\partial T_0}$,  are given by
\begin{equation*}
\widehat{S}(t)=(\alpha-1)\beta S(t)I(t)\left(1+\nu I(T_0)\int_{T_0}^{t}\frac{ds}{I(s)}\right)
\end{equation*}
and
\[\widehat{S(T_0)}=-\beta S(T_0)I(T_0).\]
\end{lemma}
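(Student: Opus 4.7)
The first identity is essentially tautological: on $[0,T_0]$ the dynamics is governed by $u\equiv 1$ and the initial data $S(0)=S_0$, $I(0)=I_0$ are independent of $T_0$, so the restriction of $S$ to $[0,T_0]$ is the restriction of one fixed trajectory $\Sigma$ of $\dot\Sigma=-\beta\Sigma\mathcal I$ (where $\mathcal I$ is the corresponding $I$-component on the first phase). Differentiating $S(T_0)=\Sigma(T_0)$ then gives $\widehat{S(T_0)}=\dot\Sigma(T_0)=-\beta S(T_0)I(T_0)$.

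For $\widehat S(t)$ with $t\in[T_0,T]$, my plan is to derive and then integrate the variational system attached to the second-phase ODE. Differentiating $\dot S=-\alpha\beta SI$ and $\dot I=\alpha\beta SI-\nu I$ with respect to $T_0$ at fixed $t$ yields the linear non-autonomous system
\[
\dot{\widehat S}=-\alpha\beta(I\widehat S+S\widehat I),\qquad \dot{\widehat I}=\alpha\beta(I\widehat S+S\widehat I)-\nu\widehat I.
\]
The initial values at $t=T_0^+$ come from the chain rule applied to the matching identity $S(T_0,T_0)=\Sigma(T_0)$: expanding $\dot\Sigma(T_0)=\dot S(T_0^+)+\widehat S(T_0^+)$ and invoking the first step produces $\widehat S(T_0^+)=(\alpha-1)\beta S(T_0)I(T_0)$, and an analogous computation for $I$ gives $\widehat I(T_0^+)=(1-\alpha)\beta S(T_0)I(T_0)$.

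The final step uses the conservation law $I+S-\frac{\nu}{\alpha\beta}\ln S=C(T_0)$ on $[T_0,T]$ (Lemma~\ref{le1}) to eliminate $\widehat I$ in favour of $\widehat S$ after differentiation with respect to $T_0$, where one directly computes $\dot C(T_0)=\nu(1-\alpha)I(T_0)/\alpha$ from the first-phase dynamics. Substituting back collapses the variational system to a single scalar linear ODE for $\widehat S$ whose homogeneous coefficient is $\alpha\beta(S-I)-\nu=\frac{d}{dt}\ln(SI)$. The ansatz $\widehat S(t)=S(t)I(t)\,h(t)$ therefore kills the homogeneous part and leaves the pure quadrature $\dot h(t)=(\alpha-1)\beta\nu\,I(T_0)/I(t)$; integrating from $T_0$ with $h(T_0^+)=(\alpha-1)\beta$ (read off from the initial condition above) yields exactly the announced expression.

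The only delicate point is the bookkeeping behind $\widehat S(T_0^+)$: the switching instant $T_0$ plays simultaneously the role of the lower limit of the second-phase ODE and that of a parameter in its initial data, so the naive value inherited from the first phase must be corrected by the transport term $\dot S(T_0^+)=-\alpha\beta S(T_0)I(T_0)$ before one obtains the correct jump. Once this is handled, the remaining operations — eliminating $\widehat I$ via the conservation law and performing a single quadrature under the ansatz $\widehat S=SI\,h$ — are entirely mechanical.
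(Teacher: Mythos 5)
Your proof is correct, and it takes a genuinely different route from the paper's. The paper first uses the conserved quantity to reduce each phase to a scalar autonomous ODE for $S$ alone (equations \eqref{S1a}--\eqref{S1b}), writes the solutions implicitly by quadrature, and then differentiates these implicit integral identities with respect to $T_0$ --- which forces it to track the dependence on $T_0$ through the integration limits \emph{and} through $S(T_0)$ inside the integrand, and to undo a change of variables to recover $\int_{T_0}^t ds/I(s)$. You instead work with the variational (sensitivity) system for $(\widehat S,\widehat I)$ directly, use the conservation law only once --- differentiated in $T_0$ --- to eliminate $\widehat I$, and solve the resulting scalar linear ODE with the integrating factor $SI$. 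I have checked the key quantities: the jump condition $\widehat S(T_0^+)=(\alpha-1)\beta S(T_0)I(T_0)$ follows correctly from $\frac{d}{dT_0}[S(T_0,T_0)]=\dot\Sigma(T_0)$ minus the transport term $\dot S(T_0^+)=-\alpha\beta S(T_0)I(T_0)$; the derivative of the conserved value is indeed $C'(T_0)=\nu(1-\alpha)I(T_0)/\alpha$; the homogeneous coefficient $\alpha\beta(S-I)-\nu$ is indeed $\frac{d}{dt}\ln(SI)$; and the quadrature $\dot h=(\alpha-1)\beta\nu I(T_0)/I$ with $h(T_0^+)=(\alpha-1)\beta$ reproduces the announced formula. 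Your treatment of the first identity is also cleaner than the paper's: since the phase-one trajectory does not depend on $T_0$, the map $T_0\mapsto S(T_0)$ is just that fixed trajectory, and its derivative is read off from the ODE with no implicit differentiation needed. The one thing to tighten when writing this up is the justification that $\partial_t$ and $\partial_{T_0}$ commute and that $S(\cdot,T_0)$ is $C^1$ up to the corner $t=T_0$ (standard smooth dependence on initial data, as the paper's footnote also invokes), so that the chain-rule expansion at $t=T_0^+$ is legitimate.
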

Thanks to this result, we may compute
\begin{eqnarray*}j_\Phi'(T_0) 
 & = &
 \frac{\nu}{\beta}\left(\frac{1}{\alpha}-1\right)\left(\frac{\widehat{S}(T)}{S(T)}-\frac{\widehat{S(T_0)}}{S(T_0)}\right)\\
 & = &
 \nu\left(\frac{1}{\alpha}-1\right)\left((\alpha-1)I(T)\left(1+\nu I(T_0)\int_{T_0}^{T}\frac{ds}{I(s)}\right)+I(T_0)\right) \\
 & = &
 \nu\left(\frac{1}{\alpha}-1\right)I(T_0)(\alpha-1)\left(\frac{I(T)}{I(T_0)}+\nu \int_{T_0}^{T}\frac{I(T)}{I(s)}ds-\frac{1}{1-\alpha}\right)  .
\end{eqnarray*}
By noting that
  \begin{align*}
  \int_{T_0}^T \frac{S(t)}{I(t)}\,dt =\ &
  \frac{1}{\alpha\beta} \int_{T_0}^T \frac{\alpha \beta S(t) - \nu + \nu}{I(t)}\,dt
  = \frac{1}{\alpha\beta} \int_{T_0}^T \frac{\dot I(t)}{I(t)^2}\,dt + \frac{\nu}{\alpha\beta} \int_{T_0}^T \frac{1}{I(t)}\,dt  \\
  =\ & \frac{1}{\alpha\beta}\left(\frac{1}{I(T_0)} - \frac{1}{I(T)} + \nu \int_{T_0}^T \frac{1}{I(t)}\,dt\right),
  \end{align*}
we have for the function $\psi$ defined in \eqref{eq:psi}:
\begin{equation}\label{eq:psi new}
\psi(T_0)
% & = &
% (1-\alpha) \beta I(T) \int_{T_0}^T \frac{S(t)}{I(t)}\,dt - 1 \\
% & = &
 =\Big(\frac{1}{\alpha} -1\Big)\left(\frac{I(T)}{I(T_0)}+\nu \int_{T_0}^{T}\frac{I(T)ds}{I(s)}-\frac{1}{1-\alpha}\right).
\end{equation}
We deduce that $j_{\Phi}'(T_0)=0$ is equivalent to 
\begin{equation}\label{eq:cond psi}
\psi(T_0)=0.
\end{equation}

\paragraph{Step 2: Zeros of $j_{\Phi}'$ and uniqueness of the optimal switching time}
 According to \eqref{SIR}, one has for any  $t\in (T_0,T)$,
  $I(t) = I(T_0) \exp\left(\int_{T_0}^t (\alpha\beta S(s)-\nu)\,ds\right)$.
  Then, 
 using the expression of $\psi$ given in \eqref{eq:psi new},  it follows that
  \begin{align*}
 \psi(T_0) =\ & \Big(\frac{1}{\alpha} -1\Big) \exp\left(\int_{T_0}^T (\alpha\beta S(s)-\nu)\,ds\right)  \\
    & + \Big(\frac{1}{\alpha} -1\Big) \nu \int_{T_0}^T \exp\left(\int^{T}_t (\alpha\beta S(s)-\nu)\,ds\right)\,dt   - \frac{1}{\alpha}.
  \end{align*}
  Introducing $\varphi:[0,T]\to \RR$ defined by $\varphi(s)=\exp\left(\int_{s}^T (\alpha\beta S(s)-\nu)\,ds\right)$, the last expression writes simply
  $$
 \psi(T_0) = \Big(\frac{1}{\alpha} -1\Big) \varphi(T_0)
  + \Big(\frac{1}{\alpha} -1\Big) \nu \int_{T_0}^T \varphi(t)\,dt   - \frac{1}{\alpha}.
  $$
  Differentiating this identity with respect to $T_0$ yields
\begin{eqnarray*}
   \psi'(T_0) &=&
   \Big(\frac{1}{\alpha} -1\Big)\left(-\alpha\beta S(T_0) + \nu + \int_{T_0}^T \alpha \beta \widehat{S}(s)\,ds \right) \varphi(T_0)\\
   & & - \nu  \Big(\frac{1}{\alpha} -1\Big) \varphi(T_0) + \nu \Big(\frac{1}{\alpha} -1\Big) \int_{T_0}^T \left(\int_t^T \alpha\beta \widehat{S}(s)\,ds\right) \varphi(t)\, dt  \\
               &= & \Big(\frac{1}{\alpha} -1\Big) \left(-\alpha\beta S(T_0) + \int_{T_0}^T \alpha \beta \widehat{S}(s)\,ds\right)\varphi(T_0)  \\
    && + \nu \Big(\frac{1}{\alpha} -1\Big)\int_{T_0}^T \left(\int_t^T \alpha\beta \widehat{S}(s)\,ds\right) \varphi(t)\,dt.
  \end{eqnarray*}

As a consequence of Lemma \ref{lem:HatS2}, both terms in the previous formula are negative, and $\psi'(T_0) < 0$.
The function $\psi$ is thus decreasing on $(0,T_0)$.
Moreover, $\psi(T) = -1<0$.
  Therefore, if $\psi(0)<0$, then \eqref{eq:cond psi}, or equivalently $j_{\Phi}'(T_0)=0$, has no solution, and thus $T_0^*=0$.
Conversely,  if  $\psi(0)\geq0$, then \eqref{eq:cond psi} admits a unique solution $T_0^*$ which is the unique critical point of $j_{\Phi}$, and of $j$.
In particular, in the case    $\psi(0)=0$, one has $T_0^*=0$.
 
 We also deduce that the function $j_{\Phi}$ is nonincreasing on $(0,T_0^*)$ and increasing on $(T_0^*,T)$, and so is $j$.

\paragraph{Step 3: Upper bound on $T_0$}
 Let us now prove that $S^T(T_0^*)\geqslant S_\herd$, where, in accordance with the notation defined after \eqref{eq55}, $(S^T,I^T)$ is the solution to System \eqref{SIR} for $u(\cdot)=1$.
Assume by contradiction that $S^T(T_0^*)< S_\herd$, i.e. $T_0^*>(S^T)^{-1}(S_\herd)$.
Since $I$ is non-increasing on $((S^T)^{-1}(S_\herd),\infty)$, we have 
$$I^T((S^T)^{-1}(S_\herd))>I^T(T_0^*).$$
Due to the fact (obtained from Lemma \ref{le1}) that
$$
\Phi_{\cR_0}(S^T((S^T)^{-1}(S_\herd)),I^T((S^T)^{-1}(S_\herd))) = \Phi_{\cR_0}(S^T(T_0^*),I^T(T_0^*)),
$$
we deduce that
\begin{equation}\label{eq: decroi}
  S_\herd-\frac{1}{\mathcal{R}_0}\ln(S_\herd)>S^T(T_0^*) -\frac{1}{\mathcal{R}_0}\ln(S^T(T_0^*)).
\end{equation}
 Since $S^T(T_0^*)\in(0,S_\herd)$ and $x\mapsto x-\frac{1}{\mathcal{R}_0}\ln(x)$ is decreasing on $(0,S_\herd)$, the inequality \eqref{eq: decroi} implies that $S^T(T_0^*)> S_\herd$, which leads to a contradiction.
 We have thus demonstrated that $S^T(T_0^*)\geqslant S_\herd$.

\paragraph{Step 4: The case $\alpha=0$}
Let us finally deal with the case ``$\alpha=0$''.
Using the fact that $S$ is constant on $(T_0,T)$, we deduce that 
$p_S(t)=0$ and $p_I(t)=1-e^{\nu (T_0-T)}$  for all $t\in(T_0,T)$. A commutation occurs at $T_0$  if, and only if,
\begin{equation}\label{eq:alpha 0}
S_\herd=w(T_0)=S(T_0)(1-e^{\nu (T_0-T)}).
\end{equation}
  The function $S$ is nonincreasing, thus there exists $T_0>0$ satisfying this relation only if $S_0 > \frac{S_\herd}{1-e^{-\nu T}}$ which is equivalent to
  $T> \frac{1}{\nu} \ln \frac{S_0}{S_0-S_\herd}$.
  If this is the case, then, since $t\mapsto S(t)$ is nonincreasing and $T_0\mapsto \frac{S_\herd}{1-e^{\nu(T_0-T)}}$ is increasing, there exists a unique $T_0$ satisfying the relation \eqref{eq:alpha 0}. We also remark that \eqref{eq:alpha 0} is equivalent to \eqref{eq:cond psi}.

%
%Thanks to similar argument, we can prove that $j_{\Phi}'(T_0)=0$ is equivalent to $\psi(T_0)=0$.

%By using the fact that $T_0$ fulfills \eqref{eq:cond psi}, one has in that case $S(t)=S(T_0)$ on $[T_0,T]$, and $I(t) = I(T_0) e^{-\nu (t-T_0)}$. Then, Eq.~\eqref{eq:cond psi} rewrites
%  \[
%  1 = \beta S(T_0) \int_{T_0}^T e^{\nu(t-T)}\,dt = \frac{S(T_0)}{S_{\text{herd}}}(1-e^{\nu(T_0-T)})
%  \]
%and one easily recovers the condition on $T$ given in the statement, as well as the equation fulfilled by $T_0$.

To achieve the proof of Theorem~\ref{th:T0}, it now remains to prove Lemma~\ref{lem:HatS2}.

\begin{proof}[Proof of Lemma~\ref{lem:HatS2}]
Using the notation $S^{T_0}$ previously defined, one has (see \eqref{S1b}) on $(T_0,T)$
\begin{equation*}
\dot S^{T_0}=-\alpha\beta  S^{T_0}\left(c_0+\frac{\nu}{\beta }\left(1-\frac{1}{\alpha} \right)\ln (S^{T_0}(T_0))-S^{T_0}+\frac{\nu}{\alpha\beta } \ln S^{T_0}\right),
\end{equation*}
and, at $T_0$, $S^{T_0}(T_0)$ is defined thanks to \eqref{S1a} by 
\begin{equation}\label{m008bis}
\int_{S_0}^{S^{T_0}(T_0)}\frac{dv}{\beta  v(c_0-v+\frac{\nu}{\beta }\ln v)}=-T_0.
\end{equation}
By differentiating \eqref{m008bis} with respect to $T_0$, one infers
\[
\widehat{S^{T_0}(T_0)}=-\beta S^{T_0}(T_0)\left(c_0-S^{T_0}(T_0)+ \frac{\nu}{\beta } \ln (S^{T_0}(T_0))\right)=-\beta S^{T_0}(T_0)I^{T_0}(T_0),
\]
which is the second identity in Lemma~\ref{lem:HatS2}.

Furthermore,  using \eqref{S1b}, one has
\begin{equation*}
\int_{S^{T_0}(T_0)}^{S^{T_0}(t)}\frac{dv}{v(c_0+\frac{\nu}{\beta }\left(1-\frac{1}{\alpha} \right)\ln (S^{T_0}(T_0))-v+\frac{\nu}{\alpha\beta }\ln v)}=\alpha\beta (T_0-t),\quad t\in [T_0,T].
\end{equation*}
Differentiating this relation with respect to $T_0$ yields for all $t\in(T_0,T]$
\begin{multline*}%\label{eq77}
\alpha\beta  =
\frac{\widehat{S^{T_0}}(t)}{S^{T_0}(t)(c_0+\frac{\nu}{\beta }\left(1-\frac{1}{\alpha} \right)\ln S^{T_0}(T_0)-S^{T_0}(t)+\frac{\nu}{\alpha\beta }\ln S^{T_0}(t))}\\
-\frac{\widehat{S^{T_0}(T_0)}}{S^{T_0}(T_0)(c_0+\frac{\nu}{\beta }\ln S^{T_0}(T_0)-S^{T_0}(T_0))}\\
-\frac{\nu}{\beta } \left(1-\frac{1}{\alpha}\right)\frac{\widehat{S^{T_0}(T_0)}}{S^{T_0}(T_0)}\int_{S^{T_0}(T_0)}^{S^{T_0}(t)}\frac{dv}{v(c_0+\frac{\nu}{\beta }\left(1-\frac{1}{\alpha} \right)\ln S^{T_0}(T_0)-v+\frac{\nu}{\alpha\beta }\ln v)^2}.
\end{multline*}

Let us simplify this latter identity.
Observe first that, because of \eqref{m1440}, one has for all $t\in(T_0,T]$
\begin{align*}
  c_0+\frac{\nu}{\beta } \left(1-\frac{1}{\alpha}\right)\ln S^{T_0}(T_0)
 % & = I^{T_0}(T_0)+S^{T_0}(T_0)-\frac{\nu}{\alpha\beta }\ln S^{T_0}(T_0)  \\
  & = I^{T_0}(t)+S^{T_0}(t)-\frac{\nu}{\alpha\beta } \ln S^{T_0}(t).
\end{align*}
By using at the same time the change of variable $v=S(t)$ and the identity
$
c_0+\frac{\nu}{\beta }\left(1-\frac{1}{\alpha} \right)\ln S(T_0)-S(t)+\frac{\nu}{\alpha\beta }\ln S(t) = I(t),
$ holding true for any $t\in (T_0,T]$, we infer that
\begin{multline*}
\int_{S^{T_0}(T_0)}^{S^{T_0}(t)} \frac{dv}{\beta v(c_0+\frac{\nu}{\beta }\left(1-\frac{1}{\alpha} \right)\ln S^{T_0}(T_0)-v+\frac{\nu}{\alpha\beta }\ln v)^2}\\
= \int_{T_0}^t \frac{1}{\beta S^{T_0}(s)(I^{T_0}(s))^2} \dot{S}^{T_0}(s) \,ds   
= \int_{T_0}^t \frac{-\alpha S^{T_0}(s)I^{T_0}(s)}{S^{T_0}(s)(I^{T_0}(s))^2}\,ds
= -\alpha \int_{T_0}^t \frac{ds}{I^{T_0}(s)}. 
\end{multline*}
Combining all these facts leads to, for all $t\in(T_0,T]$,
  \begin{align*}
    \alpha\beta = \frac{\widehat{S^{T_0}}(t)}{S^{T_0}(t) I^{T_0}(t)} + \beta - \nu (1-\alpha)\beta I^{T_0}(T_0) \int_{T_0}^t \frac{ds}{I^{T_0}(s)}.
  \end{align*}
Therefore, we arrive at, for all $t\in(T_0,T]$,
\begin{equation*}
\widehat{S^{T_0}}(t)=(\alpha-1)\beta S^{T_0}(t)I^{T_0}(t)\left(1+\nu I^{T_0}(T_0)\int_{T_0}^{t}\frac{ds}{I^{T_0}(s)}\right),
\end{equation*}
%and the expected result follows easily.
which is the first identity of the statement.
This achieves the proof of Lemma~\ref{lem:HatS2}, and consequently of Theorem~\ref{th:T0}.
\end{proof}

\subsection{Proof of Theorem \ref{theo:min time}}
Let $T^*$ be the minimal time associated to the Minimal time problem. Let $u^*_{\alpha,T^*}$ be the unique solution of Problem~\eqref{OCP} associated to $T=T^*$.
Assume by contradiction that $u^*_{\alpha,T^*}$ does not solve the minimal time problem, i.e.\ $S_{\infty}(u^*_{\alpha,T^*}) < S_\herd - \eps$. 
 Then for each $u\in\mathcal{U}_{\alpha,T^*}$ one has
 \[
   S_{\infty}(u)\leqslant S_{\infty}(u^*_{\alpha,T^*}) < S_\herd - \eps,
 \]
which is in contradiction with the fact that $T^*$ solves the minimal time problem.

Conversely, let $T>0$ and $u^*$ realizing the maximum of \eqref{OCP}, i.e.\ $S_\infty(u^*) = S_{\infty,\alpha,T}^*$. 
Let $T^*$ be the minimal time of intervention such that $S_{\infty}(u) \geqslant S_{\infty,\alpha,T}^*$ for some $u\in\mathcal{U}_{\alpha,T^*}$. Since $S_\infty(u^*) = S_{\infty,\alpha,T}^*$, we necessarily have $T^*\leqslant T$. 
By definition of $T^*$, there exists $u\in\mathcal{U}_{\alpha,T^*}$ such that $S_{\infty}(u) \geqslant S_{\infty,\alpha,T}^*$.
Consider $v :=u\mathds{1}_{(0,T^*)} + \mathds{1}_{(T^*,T)}$. One has $v\in \mathcal{U}_{\alpha,T}$ and 
\[
  S_\infty(v) =S_\infty(u) \geqslant S_{\infty,\alpha,T}^*.
\]
Hence, by definition of $S_{\infty,\alpha,T}^*$, one has $v=u^*$.
But $v$ does not have the form of the minimal solution in Theorem \ref{th1}, unless $T^*=T$.

\section{Numerical illustrations}
\label{se24}

This section is devoted to numerical illustrations around the optimal control problem \eqref{OCP}. 
The codes are available on:
\begin{center} \smallskip
\url{https://github.com/michelduprez/optimal-immunity-control.git}\smallskip
\end{center}

We use the parameter values given in Table \ref{tab:value}, coming from \cite{Salje:2020aa} and corresponding to the lockdown conditions in force in France between March 17th and May 11th 2020.
We suppose that, on the total population of $6.7e7$ persons in France, there are no removed individuals and $1000$ infected individuals at the initial time, i.e.\ $R_0=0$ and $I_0=1e3/6.7e7$. 
\begin{table}[ht]
\centering
\begin{tabular}{|c|l|c|c|}
\hline
\textit{Parameter}&\textit{Name}&\textit{Value}\\\hline
 $\beta$&Probability of transmission&0.29  \\\hline
 $\nu$&Rate at which the infected are removed&0.1 \\ \hline
 $\alpha_{\mbox{\tiny lock}}$&Lockdown in France (March-May 2020)&0.231 \\\hline
 $N$&Total population in France&6.7e7\\\hline
  $S_0$&Initial proportion of susceptible&$1-I_0$  \\ \hline
   $I_0$&Initial proportion of  infected&$1e3/N$  \\ \hline
    $R_0$&Initial proportion of removed&0  \\ \hline
\end{tabular}
\caption{Value of the parameters used for system \eqref{eq1a}-\eqref{eq1b} (see \cite{Salje:2020aa})}
\label{tab:value}
\end{table}

Solutions to ODEs are computed with the help of a Runge-Kutta fourth-order method
and the research of the optimal controls is conducted thanks to the Algorithms described below.

Our approach rests upon the use of gradient like algorithms needing the computation of the differential of $S_\infty$ in an admissible direction\footnote{More precisely, we call ``admissible direction'' any element of the tangent cone $\mathcal{T}_{u,\cU_{\alpha, T}}$ to the set $\cU_{\alpha, T}$ at $u$. The cone $\mathcal{T}_{u,\cU_{\alpha, T}}$ is the set of functions $h\in L^\infty(0,T)$ such that, for any sequence of positive real numbers $\varepsilon_n$ decreasing to $0$, there exists a sequence of functions $h_n\in L^\infty(0,T)$ converging to $h$ as $n\rightarrow +\infty$, and $u+\varepsilon_nh_n\in\cU_{\alpha, T}$ for every $n\in\NN$.%\label{footnote:cone}
} $h$. 
According to the proof of Theorem~\ref{th1} (see below Section \ref{se31}), this differential reads
\[
DS_\infty(u)\cdot h=\int_0^T(\nu-\beta S(p_I-p_S))Ih \,dt,
\]
where $(p_S,p_I)$ denotes the adjoint state, solving the backward adjoint system \eqref{eq7a}-\eqref{eq7b}.
%\begin{equation}\label{eq:dual1}
%\left\{\begin{array}{l}
% -\frac{d }{d t}\begin{pmatrix} p_S \\ p_I \end{pmatrix} = \begin{pmatrix}-u\beta I & u\beta I\\ -u\beta S & u\beta S-\nu \end{pmatrix}\begin{pmatrix}p_S \\ p_I \end{pmatrix},\\
%p_S(T)=1-\frac{\nu}{\beta S(T)},\qquad \ p_I(T)=1.
%\end{array}\right.
%\end{equation}
Thanks to this expression of $DS_\infty(u)\cdot h$, we deduce a simple projected gradient algorithm to solve numerically the optimal control problem \eqref{OCP2}, then \eqref{OCP}.
The algorithm is described in Algorithm \ref{algo:full}.
The projection operator $\mathbb{P}_{\mathcal{U}_{\alpha,T}}$ is given by
\[
\mathbb{P}_{\mathcal{U}_{\alpha,T}}u(t)=\min\{\max\{u(t),\alpha\},1\}, \quad \text{for a.e. }t\in [0,T].
\]

\begin{algorithm}
\caption{Solving Problem \eqref{OCP} by projected gradient descent}
\label{algo:full}
\begin{algorithmic}[1]
\REQUIRE{$u_0\in\mathcal{U}_{\alpha,T}$, $\varepsilon>0$}
\WHILE{$|S_\infty (u_{k+1})-S_\infty (u_k)|>\varepsilon$}
 \STATE Compute $(S_k,I_k)$ solution to the primal system \eqref{SIR}
  \STATE Compute $(p_{S,k},p_{I,k})$ solution to the dual system \eqref{eq7a}-\eqref{eq7b}
 \STATE Compute 
 $u_{k+1}:=\mathbb{P}_{\mathcal{U}_{\alpha,T}}(u_k-\rho_k DS_\infty(u_k))$ 
where $\rho_k$ is the step, chosen variable such that $S_\infty (u_{k+1})-S_\infty (u_k)<0$.
\ENDWHILE\RETURN $u_{k+1}$
\end{algorithmic}
\end{algorithm}

We recall that it is equivalent to maximize $j$ and to minimize $j_\Phi$.
According to Theorem \ref{th1}, solutions $u^*$ and $T_0^*$ to \eqref{OCP} and \ref{eq:opt T0}, respectively, satisfy $u^*=u_{T^*_0}$, moreover  $j_{\Phi}$ is decreasing on $(0,T_0^*)$ and increasing on $(T_0^*,T)$.  
Taking advantage of this property, we also test a second algorithm based on the bisection method.
This method is described in Algorithm~\ref{algo:finite dim}.

\begin{algorithm}
\caption{Solving Problem \eqref{eq:opt T0} (then \eqref{OCP}) by bisection method}
\label{algo:finite dim}
\begin{algorithmic}[1]
\REQUIRE{$k\in\mathbb{N}^*$}

\STATE\textbf{Initialization:} $T_{0,\mbox{\tiny min},0}=0$ and $T_{0,\mbox{\tiny max},0}=T$

\FOR{$i=1,\dots, k$}
\STATE $T_{0,\mbox{\tiny left}}= T_{0,\mbox{\tiny min},i}+(T_{0,\mbox{\tiny max},i}-T_{0,\mbox{\tiny min},i})/3$
\STATE  $T_{0,\mbox{\tiny right}}=T_{0,\mbox{\tiny min},i}+2(T_{0,\mbox{\tiny max},i}-T_{0,\mbox{\tiny min},i})/3$
 \STATE Compute $(S_{\mbox{\tiny left}},I_{\mbox{\tiny left}})$, $(S_{\mbox{\tiny right}},I_{\mbox{\tiny right}})$ solutions to \eqref{eq11} for $u_{T_{0,\mbox{\tiny left}}}$, $u_{T_{0,\mbox{\tiny right}}}$
  
  \IF{$j_\Phi(T_{0,\mbox{\tiny left}})\geq j_\Phi(T_{0,\mbox{\tiny left}})$}
  \STATE $T_{0,\mbox{\tiny min,i+1}}= T_{0,\mbox{\tiny left}}$
  and $T_{0,\mbox{\tiny max,i+1}}=T_{0,\mbox{\tiny max,i}}$
  \ELSE\STATE $T_{0,\mbox{\tiny min,i+1}}=T_{0,\mbox{\tiny min,i}}$
and  $T_{0,\mbox{\tiny max}}= T_{0,\mbox{\tiny right}}$
  \ENDIF
  %\ElsIf{$quality\ge 7$}\State $a\gets good$
\ENDFOR
\RETURN $T_{0,k+1}:=(T_{0,\mbox{\tiny max},k+1}+T_{0,\mbox{\tiny min},k+1})/2$, $u_{k+1}:=u_{T_{0,k+1}}$
\end{algorithmic}
\end{algorithm}

Let us compare the numerical solutions obtained with these approaches for each problem \eqref{OCP} and  \eqref{eq:opt T0}.
Optimal trajectories denoted respectively $(S^{u^*},I^{u^*},R^{u^*})$ and $(S^{T_0^*},I^{T_0^*},R^{T_0^*})$ associated to the optimal control problems \eqref{OCP} and \eqref{eq:opt T0} are provided on Figures \ref{fig:u opt alpha=0} and \ref{fig:compar sol} for the parameter choices $\alpha\in\{0,\alpha_{\mbox{\tiny lock}}\}$ and $T=100$. To capture the  full behavior of the trajectories, the computation window is $[0,200]$.
One recovers all the theoretical results: the solutions to \eqref{OCP} and \eqref{eq:opt T0} are as expected and do coincide.

By using Lemma~\ref{co1}, it is easy to determine numerically the optimal value $S_{\infty,\alpha,T}^*$ by solving the equation
\begin{equation*}
S^{u^*}(T) + I^{u^*}(T) - \frac{\nu}{\beta} \ln S^{u^*}(T)=
S_{\infty,\alpha,T}^*- \frac{\nu}{\beta} \ln S_{\infty,\alpha,T}^*.
\end{equation*}
This allows to investigate numerically on Fig.~\ref{fig:Sinf} and \ref{fig:T0} the dependency of $S_{\infty,\alpha,T}^*$ and $T_0^*$ with respect to the parameters $T$ and $\alpha$.
On Fig. \ref{fig:Sinf}, for $T=400$, we observe numerically that the lower bound $\overline{\alpha}$ 
given in Proposition \ref{prop:Sinf} and below which $S$ can get as close as we want to $S_\herd$ over an infinite horizon, is optimal ($\overline{\alpha}\approx 0.56$).
In particular, for lockdown conditions similar to the ones in effect in France between March and May 2020 ($\alpha\approx 0.231$), it appears that it is possible to come as close as we want to the optimal bound $S_\herd$ of inequality \eqref{eq:bound S}.
Interestingly, we observe on Fig. \ref{fig:T0}-left that when $\alpha$ is not small enough we have $T_0=0$ for $T$ large enough.

We also mention that Fig \ref{fig:Sinf}-left
gives the solution to the minimal time problem. Indeed, from Theorem \ref{theo:min time}, a control $u^*$ is optimal for \eqref{OCP} iff it is optimal for the minimal time problem. Then, given $\eps>0$ and $\alpha\leq \overline{\alpha}$, the minimal time of action such that the final value of susceptible is at a distance $\eps$ of $S_\herd$ is obtained by computing the intersection of the curves in Fig \ref{fig:Sinf}-left with the horizontal line $S_\herd-\eps$.
As expected, when $\alpha$ is too large, i.e.\ when the lockdown is insufficient, the solution stays far away from $S_\herd$ (see Fig \ref{fig:Sinf}-right).

Figure \ref{fig:values J} represents  the function $j_\Phi$ for different values of $\alpha$ and $T$. We recover the fact that $j_\Phi$ is decreasing then increasing stated in Theorem \ref{th:T0}.
The fact that $j_\Phi$ is non-convex highlights the difficulty of the considered optimal control problem.

\begin{figure}[ht]
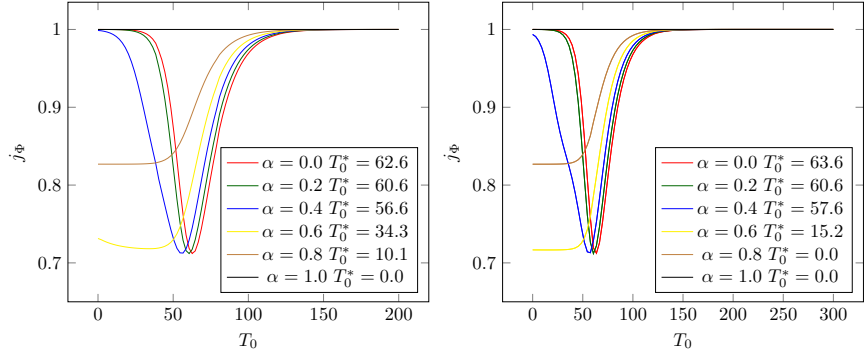
 %\hfill
\begin{center}
% [inline block 0: 10 envs, 250927 chars -> data_tex | \begin{tikzpicture}[thick,scale=0.7, every node/.style={scale=1.0}] \begin{axis}[xlabel=Time (days),%ylabel=$J(u)-J(u_h)...]
 %\hfill

\end{center}
\caption{
Graph of the optimal value $j_\Phi$ with respect to $T_0$ for $T=200$ (left) and $T=300$ (right).
}\label{fig:values J}
\end{figure}

\section*{Acknowledgments}

The authors express their sincere acknowledgment to 
working groups Maths4Covid19 and OptimCovid19 for fruitful discussions and in particular
their colleagues Luis Almeida (CNRS UMR 7598, LJLL, France), Emmanuel Franck (INRIA Grand-Est and IRMA Strasbourg, France), Sidi-Mah\-moud Kaber (Sorbonne Universit\'e, LJLL, France), Gr\'egoire Nadin (CNRS UMR 7598, LJLL, France), Beno\^\i t Perthame (Sorbonne Universit\'e, LJLL, France).

\bibliographystyle{amsalpha}
\bibliography{biblio-Opt-Lock}

\end{document}